\documentclass[preprint,12pt]{elsarticle}



\usepackage{graphics}

\usepackage{amssymb}
\usepackage{amsthm}
\usepackage{amsmath}
\usepackage{mathabx}
\usepackage{color}
\usepackage{amsopn}
\usepackage{amsmath}
\usepackage{mathabx}
\usepackage{pstricks}
\usepackage{color}
\usepackage{pstcol}
\usepackage{pst-plot}
\usepackage{pst-tree}
\usepackage{pst-eps}
\usepackage{multido}
\usepackage{pst-node}
\usepackage{pst-3d}
\usepackage{colordvi}
\usepackage{multirow}
\usepackage{pst-grad}
\usepackage{colortbl}

\DeclareMathOperator{\interior}{Int}

\newtheorem{theorem}{Theorem}[section]

\newtheorem{definition}[theorem]{Definition}

\newtheorem{lemma}[theorem]{Lemma}
\newtheorem{proposition}[theorem]{Proposition}
\newtheorem{remark}[theorem]{Remark}






\journal{Journal of Multivariate Analysis}

\begin{document}

\renewcommand{\listtablename}{\'indice de tablas}

\nocite{*}

\begin{frontmatter}



\title{A fully data-driven method for estimating the shape of a point cloud}

\author[rvt]{A. Rodr\'iguez-Casal}
\author[rvt]{P. Saavedra-Nieves \corref{cor1}}
\address[rvt]{Department of Statistics and Operations Research, University of Santiago de Compostela, Spain}
\cortext[cor1]{Corresponding author: paula.saavedra@usc.es (P. Saavedra-Nieves)}

\begin{abstract}
Given a random sample of points from some unknown distribution, we propose a new data-driven method for estimating its probability support $S$.
Under the mild assumption that $S$ is $r-$convex, the smallest
$r-$convex set which contains the sample points is the natural estimator. The main problem for using this estimator in practice
is that $r$ is an unknown geometric characteristic of the set $S$. A stochastic algorithm is proposed
for selecting its optimal value from the data under the hypothesis that the sample is uniformly generated. The new data-driven reconstruction of $S$ is able to
achieve the same convergence rates as the convex hull for estimating convex sets, but under a much more flexible
smoothness shape condition. 
\end{abstract}

\begin{keyword}Support estimation \sep $r-$convexity \sep uniformity \sep maximal spacing
\end{keyword}

\end{frontmatter}



\section{Introduction}
\label{intro}

Support estimation deals with the problem of reconstructing the
compact and nonempty support $S\subset \mathbb{R}^d$ of an absolutely continuous random vector $X$ assuming that a
random sample $\mathcal{X}_{n}=\{X_{1},...,X_{n}\}$ from $X$ is given. In practical terms, the question is
how to reconstruct the contour of Aral Sea in Figure \ref{kkkk090} from the uniform sample $\mathcal{X}_{1500}$ drawn inside it.

\begin{figure}
\begin{picture}(-30,80)
\put(-7,-56){\includegraphics[height=4.35cm, width=.223\textwidth]{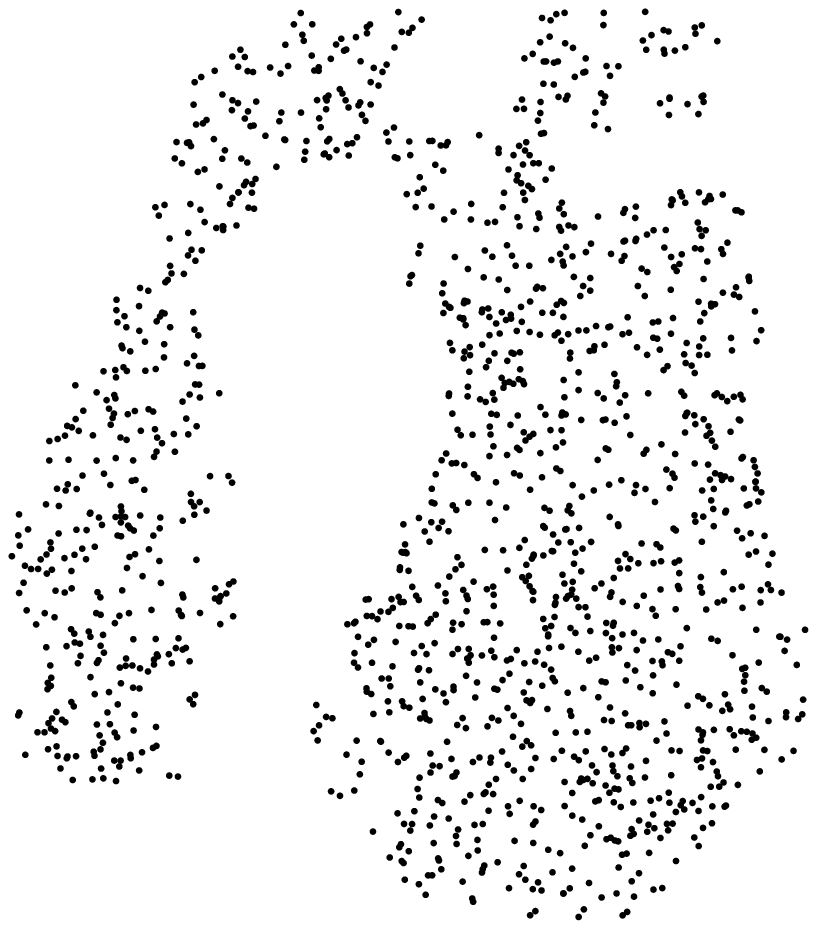}}
\put(143,-40){\includegraphics[height=3.02cm, width=.223\textwidth]{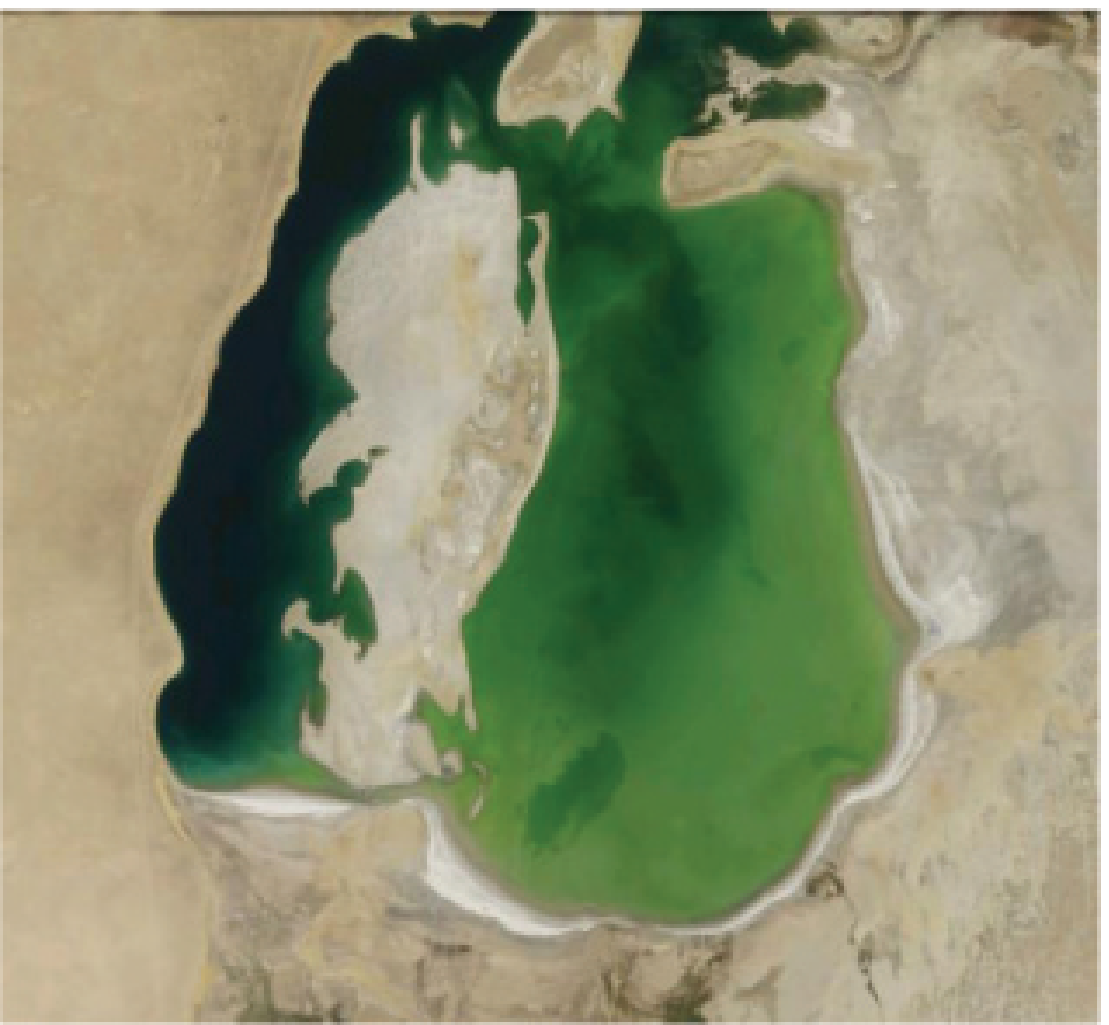}}
\put(305,-68){\includegraphics[height=5.1cm, width=.26\textwidth]{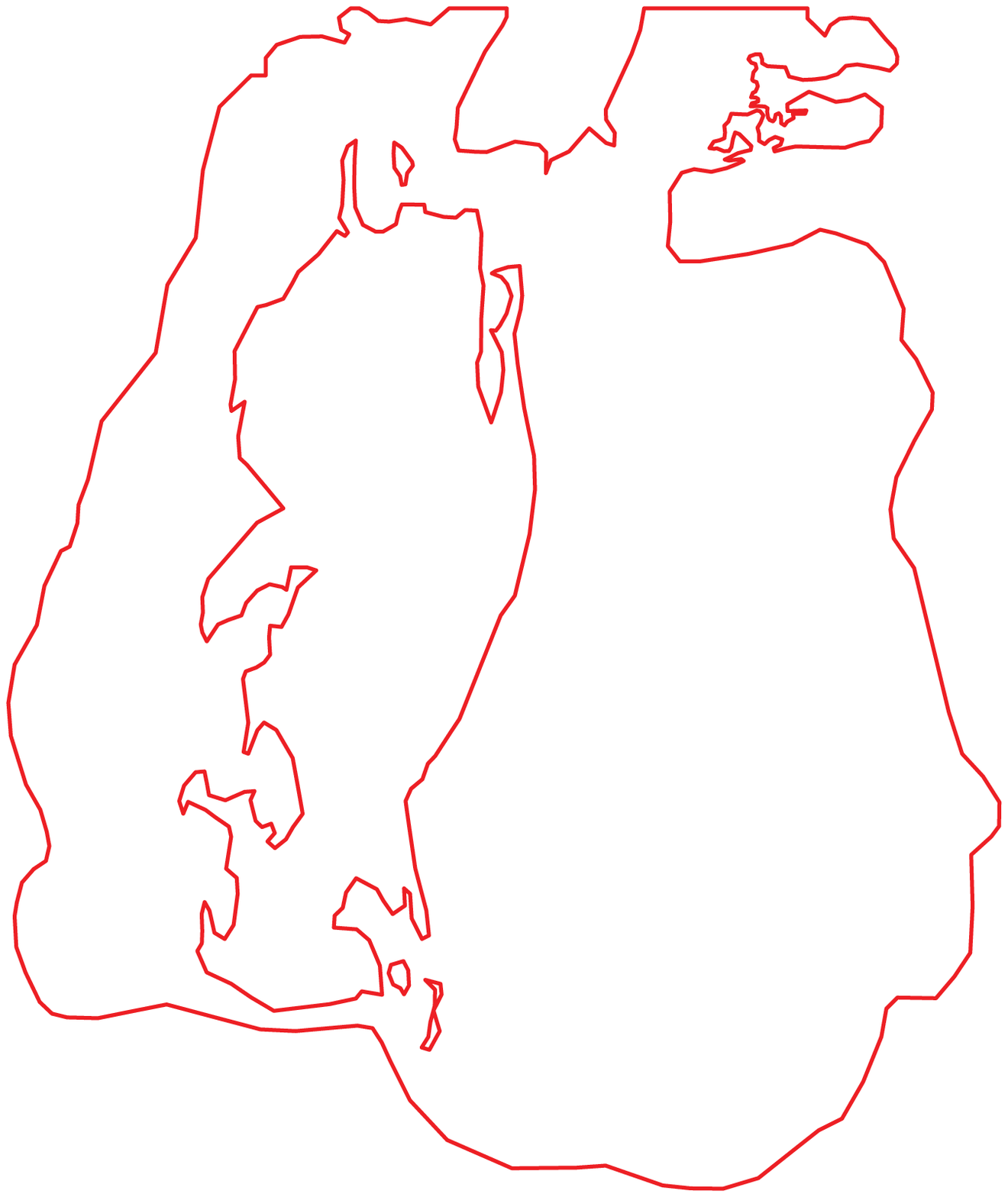}}
\put(40,66){(a)}
\put(182,66){(b)}
\put(352,66){(c)}
\end{picture}
\vspace{1.7cm}\caption{(a) $\mathcal{X}_{1500}$ on the Aral Sea. (b) Aral Sea's image from the Moderate Resolution Imaging Spectroradiometer on NASA's Terra satellite in 2000. (c) Aral Sea's boundary.}\label{kkkk090}
\end{figure}

The previous question has different but quite natural responses depending on the available information on $S$. For example, if no assumptions
are made a priori on the shape of the support $S$, Chevalier (1976) and Devroye and Wise (1980) proposed a
general purpose estimator which is just a sort of \emph{dilated} version of $\mathcal{X}_{n}$. Specifically,
$$
S_n=\bigcup_{i=1}^nB_{\epsilon_n}[X_i],
$$where $B_{\epsilon_n}[X_i]$ denotes the closed ball centered at $X_i$ with radius $\epsilon_n$, a sequence of
smoothing parameters which must tend to zero but not too quickly in order to achieve consistency. See also Grenander (1981), Cuevas (1990), Korostel\"ev and Tsybakov (1993) or Cuevas
and Rodr\'iguez-Casal (2004). The main disadvantage of this estimator is its dependence on the unknown and influential radius of the balls $\epsilon_n$. Small values of $\epsilon_n$ provide split estimators whereas for large values of $\epsilon_n$ the estimator could considerably overestimate $S$. Ba\'illo et al. (2000) and Ba\'illo and Cuevas (2001) suggested two general methods
for selecting the parameter $\epsilon_n$ assuming that $S$ is connected and star-shaped, respectively.

However, more sophisticated alternatives, that can achieve better error rates, could be used if some a priori
information about the shape of $S$ is available. For instance, if the support is assumed to be convex then the convex hull of the
sample points, $H(\mathcal{X}_n)$, provides a natural support estimator. This
is just the intersection of all convex sets containing $\mathcal{X}_{n}$. For analyzing in depth this estimator, see
Schneider (1988, 1993), D\"{u}mbgen and Walther (1996) or Reitzner (2003).

In practise, the convexity assumption may be too restrictive, see the Aral Sea example in Figure \ref{kkkk090}. So, it can be useful to introduce the notion of $r-$convexity, a more
flexible shape condition. A closed set $A\subset\mathbb{R}^d$ is said to be $r-$convex, for some $r>0$, if $A=C_{r}(A)$, where
$$C_{r}(A)=\bigcap_{\{B_r(x):B_r(x)\cap
A=\emptyset\}}\left(B_r(x)\right)^c$$
denotes the $r-$convex hull of $A$ and $B_r(x)$, the open ball with
center $x$ and radius $r$. The $r-$convex hull is closely related to the closing of $A$ by $B_r(0)$ from the mathematical morphology, see Serra (1982). It can be shown that
$$
C_{r}(A)=(A\oplus r B)\ominus r B,
$$
where $B=B_1(0)$, $\lambda C=\{\lambda c: c\in C\}$, $C\oplus D=\{c+d:\ c\in C, d\in D\}$ and $C\ominus D=\{x\in\mathbb{R}^d:\ \{x\}\oplus D\subset C\}$, for $\lambda \in \mathbb{R}$ and
sets $C$ and $D$.

 If it is assumed that $S$ is $r-$convex, $C_r(\mathcal{X}_n)$ is the natural estimator for the support. This estimator is well known in the computational geometry literature for producing
good global reconstructions if the sample points are (approximately) uniformly distributed on the
set $S$. See Edelsbrunner (2014) for a survey on the subject. Although the
 $r-$convexity is a more general restriction than the convexity, $C_r(\mathcal{X}_n)$ can achieve the same convergence rates than $H(\mathcal{X}_n)$, see Rodr\'{\i}guez-Casal (2007). However, this
 estimator depends on the unknown parameter $r$. Figure \ref{oooooolgiop} shows its influence by
 using the random sample on the Aral Sea presented in Figure \ref{kkkk090}. Small values of $r$ provide estimators almost equal to $\mathcal{X}_n$. However,
 if large values of $r$ are considered then $C_r(\mathcal{X}_n)$ practically coincides with $H(\mathcal{X}_n)$, see Figure \ref{oooooolgiop} (d).

\begin{figure}
\hspace{-.2cm}\begin{picture}(-30,33)
\put(-10,-79){\includegraphics[height=3.1cm, width=.23\textwidth]{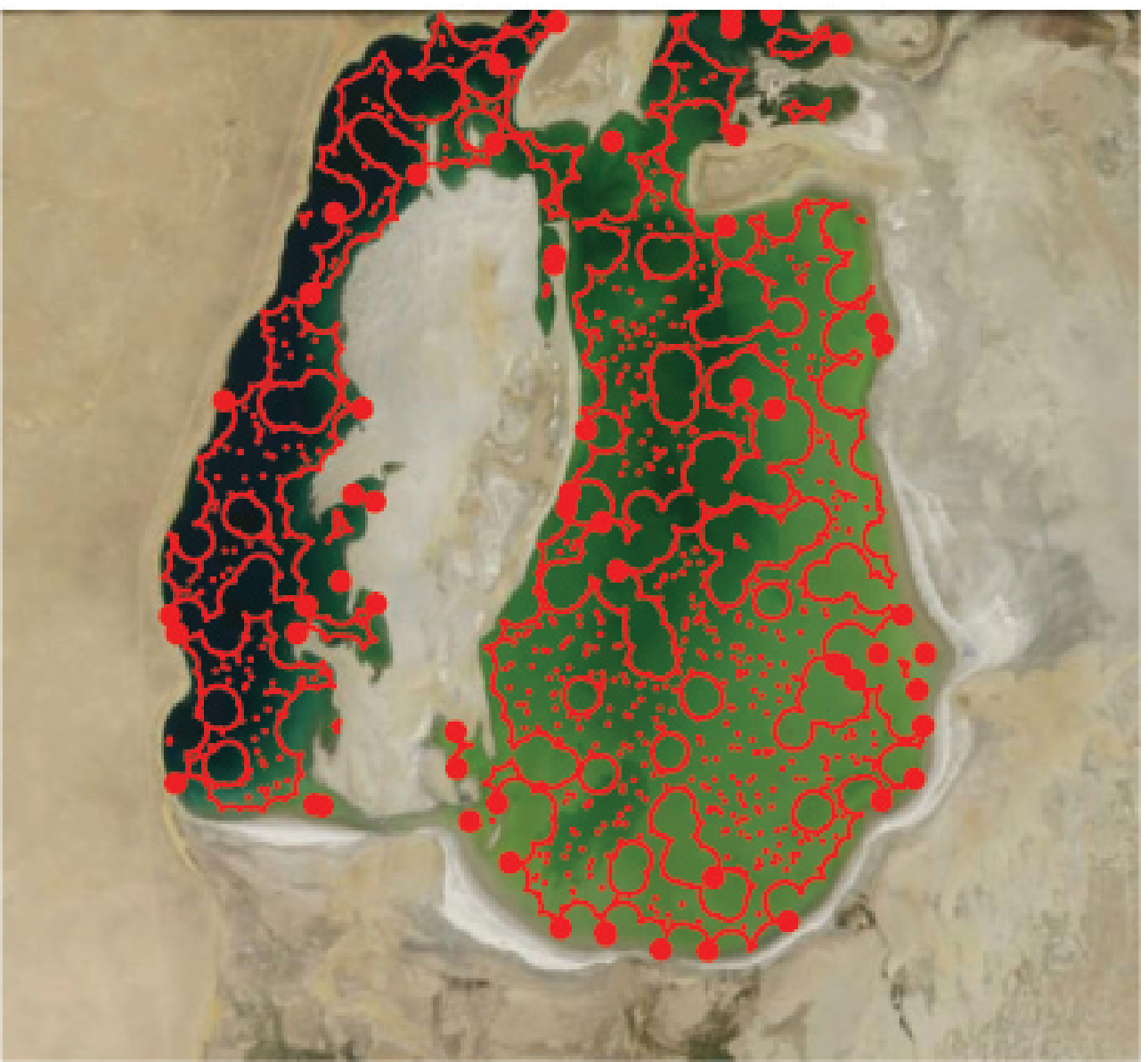}}
\put(100,-79){\includegraphics[height=3.1cm, width=.23\textwidth]{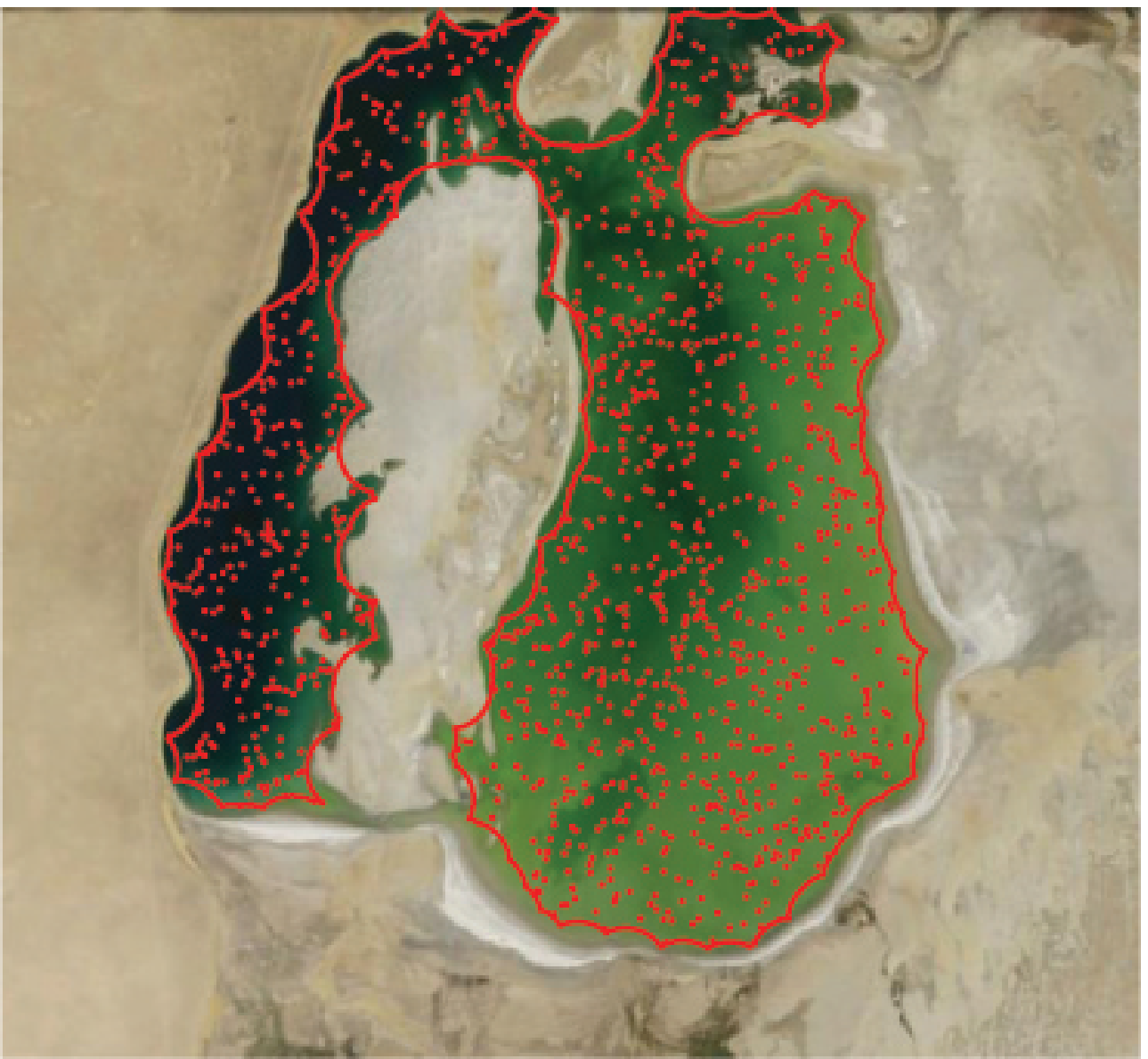}}
\put(210,-79){\includegraphics[height=3.1cm, width=.23\textwidth]{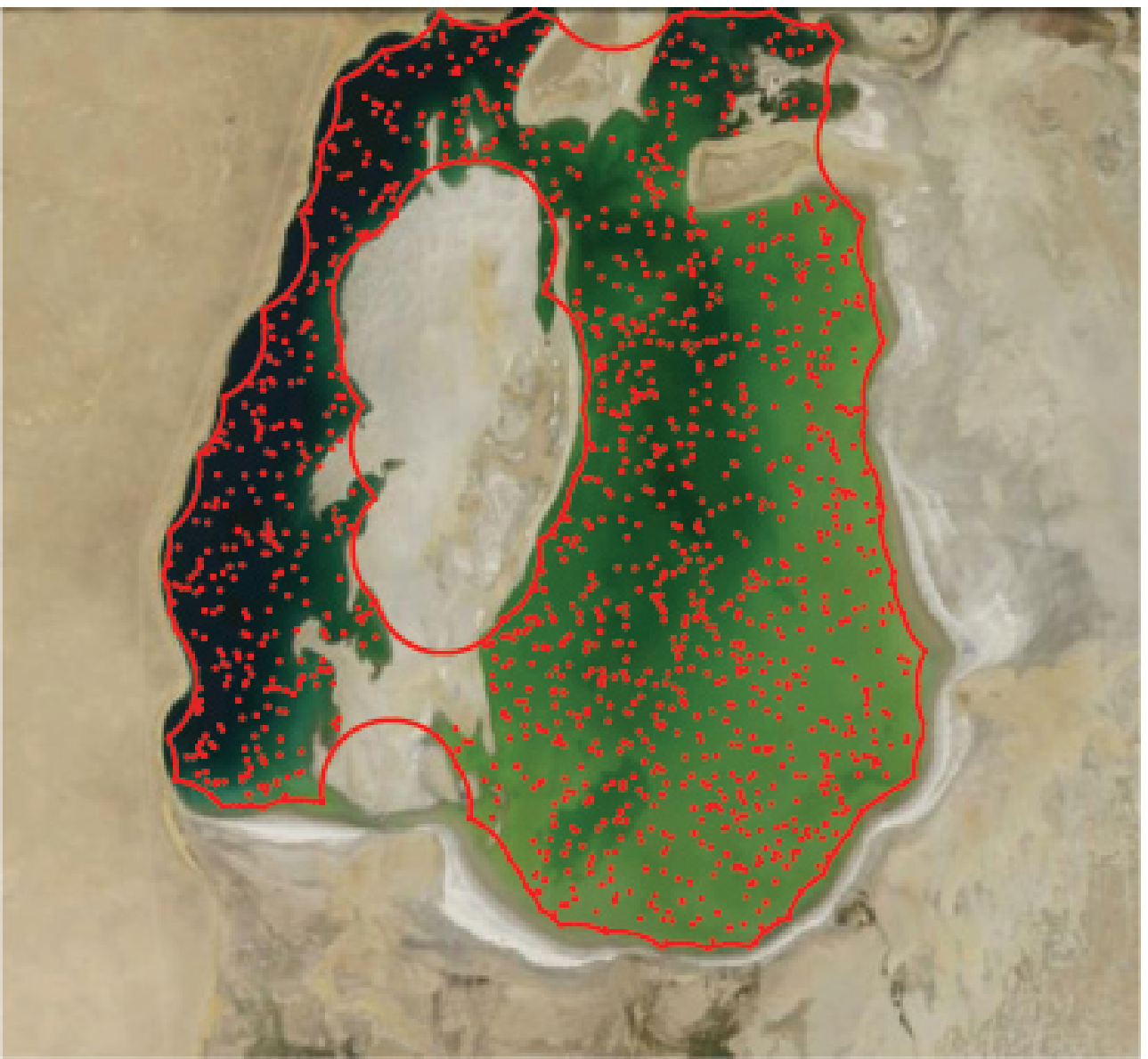}}
\put(320,-79){\includegraphics[height=3.1cm, width=.23\textwidth]{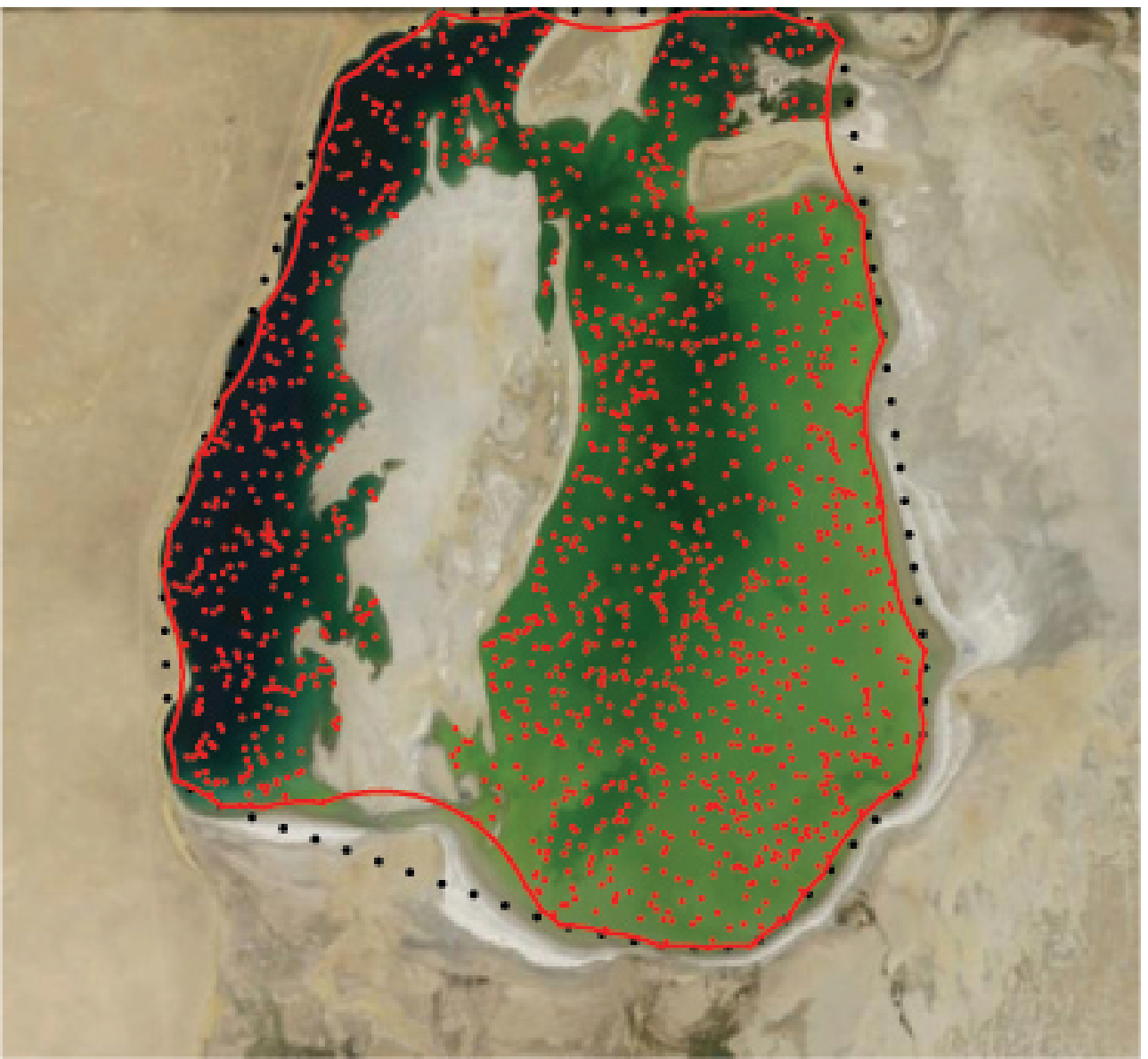}}
\put(30,25){(a)}
\put(140,25){(b)}
\put(248,25){(c)}
\put(359,25){(d)}
\end{picture}
\vspace{3cm}\caption{The boundary of $C_r(\mathcal{X}_{1500})$ is shown in red for (a) $r=10$, (b) $r= 25$, (c) $r=40$ and (d) $r=90$. The boundary of $H(\mathcal{X}_{1500})$ is shown in dotted line in (d).}\label{oooooolgiop}
\end{figure}

According to the previous comments, support estimation can be considered as a geometric counterpart of the classical theory
of nonparametric functional estimation, see Simonoff (1996). The estimators typically depend on a sequence of
smoothing parameters in both theories. Theoretical results make special emphasis on asymptotic properties, especially
consistency and convergence rates but they do not give any criterion for selecting the unknown parameters.
The aim of this paper is to overcome this drawback and present a method for selecting the parameter $r$ from the available data. This problem, for the bidimensional case,
has already been studied in literature by Mandal and Murthy (1997). They proposed a selector for $r$ based on
the concept of minimum spanning tree but only consistency of the method was provided.

The automatic selection criterion which will be proposed
in this work is based on a very intuitive idea. As it can be seen in Figure \ref{oooooolgiop} (c) or (d), land areas are contained
in $C_r(\mathcal{X}_n)$ if $r$ is too large. So, the estimator contains a big ball (or spacing) empty of sample points. Janson (1987) calibrated the size of the maximal spacing
when the sample distribution is uniform on $S$. Recently, Berrendero et al. (2012) used this result to test uniformity when the support is unknown. Here, we will
follow the somewhat opposite approach. We will assume that $\mathcal{X}_n$ follows a uniform distribution on $S$ and if a big
enough spacing is found in $C_{r}(\mathcal{X}_n)$ then $r$ is too large. We select the largest value of $r$ compatible with the uniformity assumption on $C_{r}(\mathcal{X}_n)$.

Once the parameter $r$ is estimated, it is natural to go back to the support estimation problem. An automatic estimator for $S$, based on the estimator of $r$, is proposed in this paper. Two metrics between sets are usually considered in order to assess the performance of a set estimator. Let $A$
 and $C$ be two closed, bounded, nonempty subsets of $\mathbb{R}^{d}$. The
Hausdorff distance between $A$ and $C$ is defined by\vspace{-0.13cm}
$$
d_{H}(A,C)=\max\left\{\sup_{a\in A}d(a,C),\sup_{c\in C}d(c,A)\right\},\vspace{-0.13cm}
$$where $d(a,C)=\inf\{\|a-c\|:c\in C\}$ and $\|\mbox{ }\|$ denotes the Euclidean norm. On the other hand, if $A$ and $C$
are two bounded and Borel sets then the distance in measure between $A$ and $C$ is defined by $d_{\mu}(A,C)=\mu(A\triangle C)$, where $\mu$ denotes the Lebesgue measure and $\triangle$, the symmetric difference, that is, $A\triangle C=(A \setminus C)\cup(C \setminus A). $
Hausdorff distance quantifies the physical proximity between two sets whereas the distance in measure
is useful to quantify their similarity in content. However, neither of these distances are completely useful for measuring
the similarity between the shape of two sets. The Hausdorff distance between boundaries, $d_H(\partial A,\partial C)$, can be also used to evaluate the
performance of the estimators, see Ba\'illo and Cuevas (2001), Cuevas and Rodr\'iguez-Casal (2004) or Rodr\'iguez-Casal (2007).

This paper is organized as follows. In Section \ref{nosometedo}, the optimal smoothing parameter of $C_r(\mathcal{X}_n)$ to be estimated is established. The new data-driven
algorithm for selecting it is presented in Section \ref{resultadosmaximo2}. Consistency of this
estimator is established in Section \ref{soooooo}. In addition, a
new estimator for the support $S$ is proposed. It is showed that it is able to achieve the same convergence rates as the convex hull for estimating convex sets. The numerical questions involving the practical application of the algorithm are analyzed in
Section \ref{numerical}. In Section \ref{simulation}, the performances of the new selector and Mandal and Murthy (1997)'s method will be
analyzed through a simulation study. Finally, proofs are deferred to Section \ref{prprpr}.

\section{Optimal smoothing parameter for the $r-$convex hull}\label{nosometedo}


The problem of reconstructing a $r-$convex support $S$ using a data-driven procedure can be solved if the parameter $r$ is estimated from a
random sample of points $\mathcal{X}_n$ taken in $S$. Next, it will be presented an algorithm to do this. The first step is to determine precisely
the value of $r$ to be estimated. It is established in Definition \ref{sup}. We propose to estimate the highest value of $r$ which verifies that $S$ is $r-$convex.

\begin{definition}\label{sup}Let $S\subset \mathbb{R}^{d}$ a compact, nonconvex and  $r-$convex set for some $r>0$. It is defined
\begin{equation}\label{maximo2}
r_0=\sup\{\gamma>0:C_\gamma(S)=S\}.\end{equation}
\end{definition}

For simplicity in the exposition, it is assumed that $S$ is not convex. Of course, if $S$ is convex $r_0$ would be infinity. In Proposition \ref{maximo}, it is proved that the supreme established in (\ref{maximo2}) is a maximum of the set $\{\gamma>0:C_\gamma(S)=S\}$. Therefore, it is possible to guarantee that $S$ is $r_0-$convex too. Then, the optimality of the smoothing parameter defined in (\ref{maximo2}) can be justified. For $r<r_0$, $C_{r}(\mathcal{X}_n)$ is a non admisible estimator since it is always outperformed by $C_{r_0}(\mathcal{X}_n)$. This is because, with probability one,
$C_{r}(\mathcal{X}_n)\subset C_{r_0}(\mathcal{X}_n)\subset S$ and hence, $d_{\mu}(C_{r_0}(\mathcal{X}_n),S)\leq d_{\mu}(C_{r}(\mathcal{X}_n),S)$ (the same holds for the Hausdorff distance). On the other hand, if
$r>r_0$ then $C_{r}(\mathcal{X}_n)$ would considerably overestimate $S$ specially if $S$ has a big hole inside, see Figure \ref{oooofffffdfffg} (a) below. However, it is not enough to assume that $S$ is $r-$convex for obtaining the proof of Proposition \ref{maximo}. It was necessary to suppose that $S$ satisfies a new geometric property slightly stronger than $r-$convexity: \vspace{4mm}\\
($R_{\lambda}^r\label{new}$) $S$ fulfills the $r-$rolling property and $S^c$ fulfills the $\lambda-$rolling condition.\vspace{3mm}\\
Following Cuevas et al. (2012), it is said $A$ satisfies the (outside) $r-$rolling condition if each boundary point $a\in\partial A$ is contained in a closed ball with radius $r$ whose interior does not meet $A$. The intuitive concept of rolling freely can be seen as a sort of geometric smoothness statement that is preserved if the limit is considered, see Proposition \ref{nonece}. There exist interesting relationships between this property and $r-$convexity. In particular, Cuevas et al. (2012) proved that if $A$ is compact and $r-$convex then $A$ fulfills the $r-$rolling condition. According to Figure \ref{Figura1}, the reciprocal is not true. For a in depth analysis of these two shape restrictions see Walther (1997, 1999).

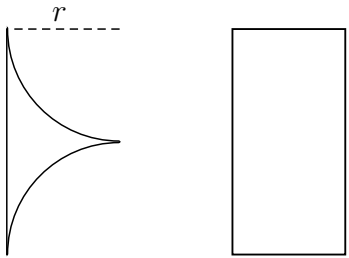
\begin{figure}[h!]
\begin{pspicture}(-3.2,-.2)(5,4.5)
\pspolygon[fillstyle=solid,fillcolor=white,linecolor=black,linewidth=0.2mm] (1.5,0)(1.5,3)(3,3)(3,0)
\pscircle[fillstyle=solid,fillcolor=white,linecolor=black,linewidth=0.2mm,linearc=3](3,3){1.5}
\pscircle[fillstyle=solid,fillcolor=white,linecolor=black,linewidth=0.2mm,linearc=3](3,0){1.5}
\pswedge[fillstyle=solid,linewidth=0.2mm,linecolor=white](3.031,1.5){3.3}{225.09}{134.6}
\pspolygon[fillstyle=solid,fillcolor=white,linecolor=black,linewidth=0.25mm] (4.5,0)(4.5,3)(6,3)(6,0)
\psline[linearc=0.25,linestyle=dashed,dash=3pt 2pt,linecolor=black,linewidth=0.2mm](1.6,3)(3,3)
\rput(2.2,3.2){$r$}
\end{pspicture}\caption{$A$ fulfills the $r-$rolling condition $\nRightarrow$ $A$ is $r-$convex.}\label{Figura1}
\end{figure}

 \begin{proposition}\label{nonece}Let $A\subset\mathbb{R}^{d}$ be a closed set. Let $\{r_n\}$ be a sequence of positive terms converging to $\overline{r}$. If
 $A$ fulfills the $r_n-$rolling condition, for all $n$, then $A$ fulfills the $\overline{r}-$rolling condition.\end{proposition}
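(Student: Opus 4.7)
The plan is to fix an arbitrary boundary point $a\in\partial A$ and construct a closed ball of radius $\bar r$ witnessing the rolling condition at $a$ by extracting a limit of the witnessing balls provided by the $r_n$-rolling property.

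First I would use the hypothesis: for every $n$ there exists $x_n\in\mathbb{R}^d$ such that $a\in B_{r_n}[x_n]$ and $B_{r_n}(x_n)\cap A=\emptyset$. Since $\|x_n-a\|\le r_n$ and $r_n\to\bar r$, the sequence $\{x_n\}$ is bounded, so I can pass to a convergent subsequence $x_{n_k}\to\bar x$. Taking limits in the inequality $\|x_{n_k}-a\|\le r_{n_k}$ yields $\|\bar x-a\|\le\bar r$, i.e.\ $a\in B_{\bar r}[\bar x]$. This handles the ``containment of $a$'' half of the rolling condition.

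The second half, showing $B_{\bar r}(\bar x)\cap A=\emptyset$, I would prove by contradiction. Suppose there were $y\in A$ with $\|y-\bar x\|<\bar r$. Choose $\varepsilon>0$ such that $\|y-\bar x\|<\bar r-2\varepsilon$. For $k$ large enough, $\|x_{n_k}-\bar x\|<\varepsilon$ and $r_{n_k}>\bar r-\varepsilon$; then the triangle inequality gives
\[
\|y-x_{n_k}\|\le\|y-\bar x\|+\|\bar x-x_{n_k}\|<\bar r-\varepsilon<r_{n_k},
\]
so $y\in B_{r_{n_k}}(x_{n_k})\cap A$, contradicting the $r_{n_k}$-rolling property at $a$. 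Since $a\in\partial A$ was arbitrary, $A$ fulfills the $\bar r$-rolling condition.

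The only subtlety — and really the single step where one can slip up — is handling open versus closed balls correctly: the witnessing balls are closed (for the containment of $a$), while their \emph{interiors} are disjoint from $A$. The strict inequality in the definition of the open ball is what makes the contradiction work, but to derive it from the convergence $r_{n_k}\to\bar r$ one needs the buffer $\varepsilon$ introduced above. Everything else is a routine compactness argument and no deeper property of $A$ (such as convexity or $r$-convexity) is needed.
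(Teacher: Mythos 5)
Your proof is correct and follows essentially the same compactness argument as the paper: extract a convergent subsequence of the witnessing centers $x_n$ and verify that the limit ball of radius $\overline{r}$ satisfies both halves of the rolling condition at $a$. If anything, your $\varepsilon$-buffer treatment is cleaner than the paper's proof, which first reduces (without real need) to a monotone increasing subsequence of radii and then argues both steps by contradiction.
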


Sets satisfying condition ($R_{\lambda}^r$) have a number of desirable properties which make them easier to handle and more general than the class of sets considered in Walther (1997, 1999) where only the case $r=\lambda$ is taken into account. In this work, the radius $\lambda$ can be different from $r$, see Figure \ref{Figura22}. Walther (1997, 1999) proved that, under ($R_{r}^r$),  $S$ is $r-$convex. In Proposition \ref{rconvexo}, it will be proved that, under ($R_{\lambda}^r$) for any value $\lambda>0$, $S$ is $r-$convex too. Therefore, ($R_{\lambda}^r$) is a sufficient condition for guaranteeing $r-$convexity of the support $S$; however, ($R_{\lambda}^r$) is not a necessary condition. Figure \ref{Figura1111} shows three $r-$convex sets which do not satisfy ($R_{\lambda}^r$) for any $\lambda>0$. As conclusion and according to the previous comments, under ($R_\lambda^r$), the equivalence between $r-$convexity and rolling property for radius $r$ can be obtained taking into account Proposition \ref{rconvexo}.

\begin{figure}[h!]\centering
\scalebox{.75}[.75]{
\begin{pspicture}(0,-1)(10,6)
\psarc[showpoints=false,linearc=0.25,linecolor=black,linewidth=0.2mm](0,0){.75}{180}{270}
\psarc[showpoints=false,linearc=0.25,linecolor=black,linewidth=0.2mm](3,0){.75}{270}{90}
\psarc[showpoints=false,linearc=0.25,linecolor=black,linewidth=0.2mm](3,3.7){.75}{270}{90}
\psarc[showpoints=false,linearc=0.25,linecolor=black,linewidth=0.2mm](0,3.7){.75}{90}{180}
\psline[linearc=0.25,linecolor=black,linewidth=0.2mm](-0.75,0)(-0.75,3.7)
\psline[linearc=0.25,linecolor=black,linewidth=0.2mm](3.,4.45)(0,4.45)
\psline[linearc=0.25,linecolor=black,linewidth=0.2mm](3.,-.75)(-0.,-.75)
\pscircle[linearc=0.25,linecolor=black,linewidth=0.2mm](0,0){.75}
\psdots*[dotsize=2.5pt](0,0)
\psline[linearc=0.25,linecolor=black,linewidth=0.2mm](0.,0)(0.75,0)
\rput(.35,.23){$\lambda$}
\psarc[showpoints=false,linearc=0.25,linecolor=black,linewidth=0.2mm](3.1,1.85){1.1}{94}{266}
\pscircle[linearc=0.25,linecolor=black,linewidth=0.2mm](3.1,1.85){1.1}
\psdots*[dotsize=2.5pt](3.1,1.85)
\psline[linearc=0.25,linecolor=black,linewidth=0.2mm](3.1,1.85)(4.2,1.85)
\rput(3.6,2.05){$r$}
\psarc[showpoints=false,linearc=0.25,linecolor=black,linewidth=0.2mm](7,0){.75}{180}{270}
\psarc[showpoints=false,linearc=0.25,linecolor=black,linewidth=0.2mm](10,0){.75}{270}{90}
\psarc[showpoints=false,linearc=0.25,linecolor=black,linewidth=0.2mm](10,3.7){.75}{270}{90}
\psarc[showpoints=false,linearc=0.25,linecolor=black,linewidth=0.2mm](7,3.7){.75}{90}{180}
\psline[linearc=0.25,linecolor=black,linewidth=0.2mm](6.25,0)(6.25,3.7)
\psline[linearc=0.25,linecolor=black,linewidth=0.2mm](10.,4.45)(7,4.45)
\psline[linearc=0.25,linecolor=black,linewidth=0.2mm](10.,-.75)(7,-.75)
\pscircle[linearc=0.25,linecolor=black,linewidth=0.2mm](7,0){.75}
\psdots*[dotsize=2.5pt](7,0)
\psline[linearc=0.25,linecolor=black,linewidth=0.2mm](7.,0)(7.75,0)
\rput(7.35,.23){$\lambda$}
\psarc[showpoints=false,linearc=0.25,linecolor=black,linewidth=0.2mm](10.1,1.85){1.1}{94}{266}
\pscircle[linearc=0.25,linecolor=black,linewidth=0.2mm](9.75,1.85){.75}
\psdots*[dotsize=2.5pt](9.75,1.85)
\psline[linearc=0.25,linecolor=black,linewidth=0.2mm](9.75,1.85)(10.5,1.85)
\rput(10.1,2.1){$\lambda$}
 \end{pspicture}}
\caption{($R_{\lambda}^r$) is a more general condition.}\label{Figura22}
\end{figure}
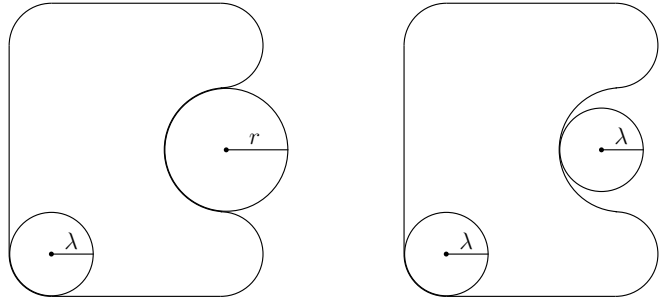$$\vspace{-18mm} $$

\begin{figure}[h!]\centering
\begin{pspicture}(-1,-.2)(6,5)
\pspolygon[fillstyle=solid,fillcolor=white,linecolor=black,linewidth=0.2mm] (-.5,0)(-.5,3)(-2.5,1.5)
\pspolygon[fillstyle=solid,fillcolor=white,linecolor=black,linewidth=0.2mm] (1.5,0)(1.5,3)(3,3)(3,0)
\pscircle[fillstyle=solid,fillcolor=white,linecolor=black,linewidth=0.2mm,linearc=3](3,3){1.5}
\pscircle[fillstyle=solid,fillcolor=white,linecolor=black,linewidth=0.2mm,linearc=3](3,0){1.5}
\pswedge[fillstyle=solid,linewidth=0.2mm,linecolor=white](3.031,1.5){3.3}{225.09}{134.6}
\psline[linearc=0.25,linestyle=dashed,dash=3pt 2pt,linecolor=black,linewidth=0.2mm](1.6,3)(3,3)
\psline[fillstyle=solid,fillcolor=white,linecolor=black,linewidth=0.2mm] (5,3)(5,0)(7,0)(7,3)
  \psarc[showpoints=false,linearc=0.2,linecolor=black,linewidth=0.2mm,fillcolor=white](6,3){1}{180}{360}
  \psline[linearc=0.25,linestyle=dashed,dash=3pt 2pt,linecolor=black,linewidth=0.2mm](5,3)(6,3)
\rput(2.2,3.2){$r$}
\rput(5.5,3.2){$r$}
\rput(-1.67,3.99){$A_1$}
\rput(2.3,3.99){$A_2$}
\rput(6.,3.99){$A_3$}
\end{pspicture}\caption{$A_1$ is convex and, so, $r-$convex for all $r>0$. $A_2$ and $A_3$ are $r-$convex.}\label{Figura1111}
\end{figure}
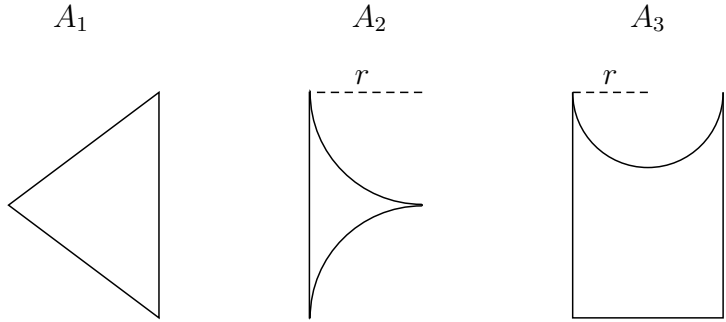

\begin{proposition}\label{rconvexo}Let $S\subset \mathbb{R}^{d}$ be a nonempty, compact support verifying ($R_{\lambda}^r$). Then, $S$ is $r-$convex.\end{proposition}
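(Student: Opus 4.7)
The plan is to show $C_r(S) \subseteq S$; the reverse inclusion holds for any set by definition of the $r$-convex hull. By taking complements, it suffices to prove that every $y \in S^c$ is contained in some open ball $B_r(x)$ disjoint from $S$, since this certifies $y \notin C_r(S)$.

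I would split into two cases. When $d(y,S) \geq r$, the ball $B_r(y)$ itself works. So assume $0 < d(y,S) < r$ (strict positivity comes from $S$ being closed). Let $s_0 \in S$ be a nearest point to $y$; then $s_0 \in \partial S$. The $r$-rolling property of $S$ at $s_0$ furnishes a point $\tilde x$ with $\|\tilde x - s_0\| = r$ and $B_r(\tilde x) \cap S = \emptyset$, while the $\lambda$-rolling property of $S^c$ at $s_0$ furnishes a point $\tilde z$ with $\|\tilde z - s_0\| = \lambda$ and $B_\lambda(\tilde z) \subseteq S$.

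The core geometric step is that these two rolling balls must be externally tangent at $s_0$: their open interiors are disjoint (one inside $S$, the other outside), so $\|\tilde x - \tilde z\| \geq r + \lambda$, while the triangle inequality gives the reverse inequality, forcing equality and placing $s_0$ on the segment $[\tilde x, \tilde z]$. Consequently, the unit vector $\nu := (\tilde x - s_0)/r = -(\tilde z - s_0)/\lambda$ is uniquely determined by $s_0$ alone, independent of the choice of rolling balls, and plays the role of outward unit normal to $S$ at $s_0$.

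The final step is to show $y = s_0 + \alpha \nu$ with $\alpha \in (0,r)$. Using the inner rolling sphere $\partial B_\lambda(\tilde z) \subseteq S$, for each unit vector $v \perp \nu$ one can produce a curve $s(t) = s_0 + tv + O(t^2)\nu$ lying on that sphere, hence in $S$. Inserting this into the nearest-point inequality $\|y - s(t)\|^2 \geq \|y - s_0\|^2$ and expanding to first order in $t$ gives $\langle y - s_0, v\rangle = 0$ as $t \to 0^{\pm}$. Hence $y - s_0$ is a multiple of $\nu$, and a similar first-order argument along the diameter of $\bar B_\lambda(\tilde z)$ (taking $v = -\nu$) shows this multiple is non-negative, so $y = s_0 + \alpha\nu$ with $\alpha = \|y-s_0\| < r$. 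Then $\|y - \tilde x\| = r - \alpha < r$, so $y \in B_r(\tilde x)$, and $\tilde x$ is the required center. The main obstacle is the uniqueness of the outward normal at each boundary point, which crucially requires \emph{both} sides of $\partial S$ to roll (though with possibly different radii); without the inner rolling ball one cannot force $y - s_0$ to align with the outward rolling direction, and without uniqueness of that direction one cannot rule out rolling balls oriented away from $y$.
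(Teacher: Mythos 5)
Your proof is correct, and its skeleton matches the paper's: reduce to showing every $y\notin S$ lies in an open $r$-ball missing $S$, dispose of the case $d(y,S)\ge r$ trivially, and otherwise work at a metric projection $s_0$ of $y$, using both rolling balls at $s_0$ to conclude $y\in B_r(\tilde x)$. Where you genuinely diverge is in the key alignment step. The paper isolates a metric-projection lemma (its Lemma 7.2): if $a$ is a nearest point of $A$ to $x$ and some ball $B_\lambda[a-\lambda\eta(a)]\subset A$, then $x=a+\rho\eta(a)$; this is proved by a pure (strict) triangle-inequality argument, and it is applied \emph{twice} — once to $y$ and once to the outer rolling center $c$ (whose nearest point in $S$ is also $s_0$) — which immediately puts $y$, $s_0$ and $c$ on the same ray and gives $\|y-c\|=r-\rho<r$. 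You instead first prove that the inner and outer rolling balls are externally tangent at $s_0$ (disjoint interiors plus the triangle inequality force $\|\tilde x-\tilde z\|=r+\lambda$ with $s_0$ on the segment), which incidentally yields uniqueness of the outward normal $\nu$ — a fact the paper never states explicitly — and then align $y-s_0$ with $\nu$ by a first-variation argument: curves $s(t)=s_0+tv+O(t^2)\nu$ on the inner sphere (all in $S$ because the closed ball $B_\lambda[\tilde z]\subset S$) give $\langle y-s_0,v\rangle=0$ for all $v\perp\nu$, and the one-sided expansion along $-\nu$ fixes the sign. Both routes are sound; the paper's is slightly more elementary (no expansions, a single reusable lemma), while yours buys the tangency/unique-normal statement for free and makes transparent why \emph{both} rolling conditions — with possibly different radii — are needed, exactly the point the paper emphasizes when contrasting $(R_\lambda^r)$ with mere $r$-convexity.
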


Having presented the relationships between the different geometric conditions, we are now ready to prove that the supreme defined in (\ref{maximo2}) is a maximum.

\begin{proposition}\label{maximo}Let $S\subset \mathbb{R}^{d}$ be a nonempty, compact and nonconvex set verifying ($R_{\lambda}^r$) and let $r_0$ be the parameter defined in (\ref{maximo2}). Then, $C_{r_0}(S)=S$ and, as consequence, $S$ fulfills the $r_0-$rolling condition.
\end{proposition}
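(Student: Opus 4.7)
The plan is to establish that $r_0 \in (0,\infty)$ and then to show that the supremum in (\ref{maximo2}) is attained, via a limiting argument that chains Propositions \ref{nonece} and \ref{rconvexo}. Positivity is immediate: Proposition \ref{rconvexo} applied to the hypothesis ($R_{\lambda}^r$) gives $C_r(S)=S$, so $r$ belongs to the set in (\ref{maximo2}) and hence $r_0 \geq r > 0$. For finiteness I would use the nonconvexity of $S$: if $C_\gamma(S)=S$ held for arbitrarily large $\gamma$, then at each point $x \notin S$ one could extract, from a sequence of balls of radii tending to infinity that contain $x$ and miss $S$, a limiting closed half-space which contains $S$ while leaving $x$ outside of its interior. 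This would force $S = H(S)$, contradicting nonconvexity, so $r_0 < \infty$.

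Knowing $r_0 \in (0,\infty)$, I pick a sequence $\{\gamma_n\}$ inside $\{\gamma > 0 : C_\gamma(S)=S\}$ with $\gamma_n \to r_0$. Since $S$ is compact and $\gamma_n-$convex for every $n$, the result of Cuevas et al.\ (2012) quoted in the paragraph preceding Proposition \ref{nonece} gives that $S$ satisfies the $\gamma_n-$rolling condition. Proposition \ref{nonece} then applies to the sequence $\{\gamma_n\}$ with finite limit $r_0$ and delivers the $r_0-$rolling condition for $S$.

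The hypothesis ($R_{\lambda}^r$) imposes $\lambda-$rolling on $S^c$ with no dependence on $r$, so this part of the condition is automatically preserved when $r$ is replaced by $r_0$. Combining it with the $r_0-$rolling condition for $S$ just obtained, $S$ verifies ($R_{\lambda}^{r_0}$). Applying Proposition \ref{rconvexo} with $r_0$ in the role of $r$ yields $C_{r_0}(S)=S$, so the supremum is in fact a maximum; the remaining consequence, that $S$ fulfills the $r_0-$rolling condition, was already established in the second step. The main obstacle is the finiteness of $r_0$: everything downstream is a clean telescoping of Propositions \ref{nonece} and \ref{rconvexo}, but ruling out $r_0 = \infty$ needs a genuinely geometric input linking balls of large radius to supporting half-spaces of $H(S)$.
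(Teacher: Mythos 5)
Your argument is correct and follows essentially the same route as the paper: choose a sequence $\gamma_n\to r_0$ with $C_{\gamma_n}(S)=S$, deduce the $\gamma_n-$rolling property from Cuevas et al.\ (2012), pass to the limit via Proposition \ref{nonece}, and then obtain $C_{r_0}(S)=S$ from Proposition \ref{rconvexo} under ($R_{\lambda}^{r_0}$). Your explicit half-space argument showing $r_0<\infty$ is a sensible addition; the paper takes the finiteness of $r_0$ for a nonconvex $S$ as implicit and does not prove it.
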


But, why is not this property true if it is assumed that $S$ is only $r-$ convex? Under ($R_\lambda^r$), we have proved that if $\{r_n\}$ converges to $r_0$ and $C_{r_n}(S)=S$ then it is verified that $C_{r_0}(S)=S$, see Proposition \ref{maximo}. Obviously, if $C_{r_n}(S)=S$ then $S$ would be $r_n-$convex, for all $ r_n\in\{r_n\}$. Then, $S$ satisfies the $r_n-$rolling condition, for all $ r_n\in\{r_n\}$. So and according to Proposition \ref{nonece}, $S$ satisfies the $r_0-$rolling condition too. However, taking into account only the rolling property with radius $r_0$ is not enough to guarantee that $S$ is $r_0-$convex.

\section{Selection of the optimal smoothing parameter}\label{resultadosmaximo2}

The uniformity test proposed in Berrendero et al. (2012) has been considered in order to estimate $r_0$ defined
in (\ref{maximo2}) from $\mathcal{X}_n$. This test is based on the multivariate spacings, see Janson (1987). In the univariate case, spacings are defined as
the length of gaps between sample points, $\mathcal{X}_n$. For general dimension $d$, the maximal spacing of $S$ is defined as
$$\Delta_n(S)=\sup\{\gamma:\exists x\mbox{ with }B_\gamma[x]\subset S\setminus \mathcal{X}_n\}.$$
The value of the maximal spacing depends only on $S$ and on the sample points $\mathcal{X}_n$. The Lebesgue measure (volume) of the balls
with radius $\Delta_n(S)$ is denoted by $V_n(S)$. Berrendero et al. (2012) used the Janson (1987)'s Theorem to introduce a uniformity test on $S$. They consider the problem of testing
$$H_0:\mbox{ }X\mbox{ is uniform with support }S.$$With significance level $\alpha$, $H_0$ will be rejected if
 \begin{equation}\label{55}
    V_n(S)>\frac{a(u_\alpha+\log{n}+(d-1)\log{\log{n}}+\log{\beta})}{n},
\end{equation}
where $a=\mu(S)$, $u_\alpha$ denotes the $1-\alpha$ quantile of a random variable $U$ with distribution
\begin{equation}
\label{U}
\mathbb{P}(U\leq u) =\exp(-\exp(-u)) \mbox{ for } u\in \mathbb{R}
\end{equation}
and the value of the constant $\beta$ is explicitly given in Janson (1987). For instance, $\beta=1$ for the bidimensional case.
If $S$ is unknown this test can not be directly applicable. Under the ($R_{\lambda}^r$) condition with $\lambda=r$, Berrendero et al. (2012) considered $S_n=C_r(\mathcal{X}_n)$ as the estimator of $S$, but no data-driven method was provided for selecting $r$. The maximal spacing of $S$ is estimated by
\begin{equation}\label{11}\hat{\Delta}_n=\sup\{\gamma:\exists x\mbox{ with }B_\gamma[x]\subset S_n\setminus \mathcal{X}_n\},\end{equation}and the critical region (\ref{55}) can be replaced
by
$$
    \hat{V}_{n,r}>\hat{c}_{n,\alpha,r}=\frac{a_n(u_\alpha+\log{n}+(d-1)\log{\log{n}}+\log{\beta})}{n},
$$where $a_n=\mu(C_{r}(\mathcal{X}_n))$ and $\hat{V}_{n,r}$ denotes the volume of the ball of radius $\hat{\Delta}_n$, see (\ref{11}).


\begin{figure}
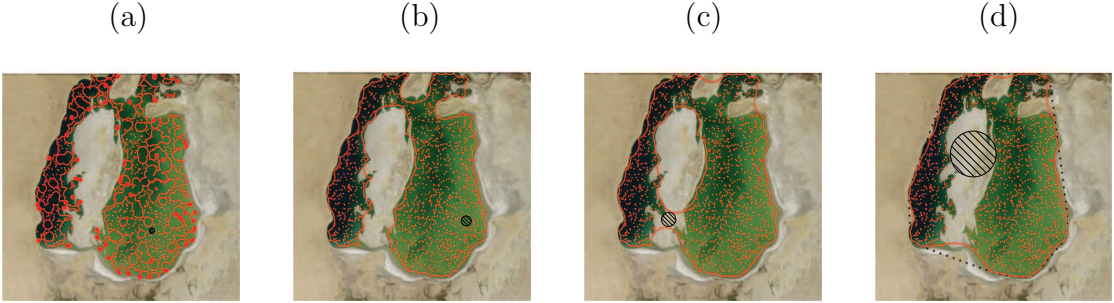

\hspace{-.2cm}\begin{picture}(-30,33)
\put(-10,-79){\includegraphics[height=3.1cm, width=.23\textwidth]{Envolturarconvexa_10bcn1500_222.eps}}
\put(100,-79){\includegraphics[height=3.1cm, width=.23\textwidth]{Envolturarconvexa_25bcn1500_222.eps}}
\put(210,-79){\includegraphics[height=3.1cm, width=.23\textwidth]{Envolturarconvexa_40bcn1500_222.eps}}
\put(320,-79){\includegraphics[height=3.1cm, width=.23\textwidth]{Envolturarconvexa_90bcn1500_222punteada.eps}}
\put(30,25){(a)}
\put(140,25){(b)}
\put(248,25){(c)}
\put(359,25){(d)}
\pscircle[linewidth=.2pt,fillstyle=vlines,hatchsep=.5pt,hatchwidth=.2pt](1.63,-1.85){.04}
\pscircle[linewidth=.2pt,fillstyle=vlines,hatchsep=.75pt,hatchwidth=.2pt](5.81,-1.72){.07}
\pscircle[linewidth=.2pt,fillstyle=vlines,hatchsep=1pt,hatchwidth=.2pt](8.5,-1.7){.099}
\pscircle[linewidth=.2pt,fillstyle=vlines,hatchsep=2pt,hatchwidth=.2pt](12.55,-0.83){.308}
\end{picture}
\vspace{3cm}\caption{Maximal spacing of $C_r(\mathcal{X}_{1500})$ is shown in dashed lines for (a) $r=10$, (b) $r=25$, (c) $r=40$ and (d) $r=90$. The boundary of $H(\mathcal{X}_{1500})$ is shown in dotted line in (d).}\label{oooooolgiop2}
\end{figure}

Figure \ref{oooooolgiop2} shows the maximal spacings for the estimators of the Aral Sea considered in Figure \ref{oooooolgiop}. A bad choice (a big value) of the smoothing parameter allows
to detect a large gap, clearly incompatible with the uniformity hypothesis, see Figure \ref{oooooolgiop2} (d) for $r=90$. This means that the estimator contains a large spacing which is not
contained in the Aral Sea. Since the sample is uniform on the original support, we can conclude that
the smoothing parameter is too large. It must be selected smaller than $90$. The estimator of  $r_0$ is based on this idea. If we assume that the distribution is uniform on $S$, and
according to Definition \ref{sup}, $r_0$ will be estimated by
\begin{equation}\label{r0estimador}
    \hat{r}_{0}=\sup\{\gamma>0:  H_0 \mbox{ is accepted on } C_\gamma(\mathcal{X}_{n})\}.
\end{equation}

The technical aspects for the estimator defined in (\ref{r0estimador}) are considered in Sections \ref{soooooo} and  \ref{numerical}.

\section{Main results}\label{soooooo}

The existence of the supreme defined in (\ref{r0estimador}) must be guaranteed. Theorem \ref{consistencia} will show that this is the case and $\hat{r}_0$ consistently
estimates $r_0$

\begin{theorem}\label{consistencia}
Let $S\subset \mathbb{R}^{d}$ be a compact, nonconvex and nonempty
set verifying ($R_{\lambda}^r$) and $\mathcal{X}_n$ a uniform and i.i.d sample on $S$. Let $r_0$ be the parameter
defined in (\ref{maximo2}) and $\hat{r}_{0}$ defined in (\ref{r0estimador}). Let $\{\alpha_n\}\subset (0,1)$ be a sequence converging to
zero verifying $\lim_{n\rightarrow \infty}\log(\alpha_n)/n=0$. Then, $\hat{r}_0$ converges to $r_0$ in probability.
\end{theorem}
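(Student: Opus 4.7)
The plan is to establish $\hat r_0 \to r_0$ in probability by proving the two one-sided tail bounds $\mathbb{P}(\hat r_0 < r_0 - \varepsilon) \to 0$ and $\mathbb{P}(\hat r_0 > r_0 + \varepsilon) \to 0$ separately, using the threshold comparison that defines $\hat r_0$, Janson's asymptotics for the maximal spacing, and Hausdorff-consistency of $r$-convex hulls of uniform samples.

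For the lower tail, it suffices to show that the test accepts at the single value $\gamma = r_0$ with probability tending to one, since this already forces $\hat r_0 \geq r_0$. By Proposition \ref{maximo}, $C_{r_0}(S) = S$. Since $C_\gamma$ is monotone in its set argument and $\mathcal X_n \subset S$, we obtain $C_{r_0}(\mathcal X_n) \subset C_{r_0}(S) = S$, so every ball realising the maximal spacing of $C_{r_0}(\mathcal X_n)$ lies inside $S \setminus \mathcal X_n$, which yields the key comparison $\hat V_{n,r_0} \leq V_n(S)$. Janson's theorem gives $n V_n(S)/\mu(S) = \log n + (d-1)\log\log n + \log\beta + O_p(1)$, while the critical value unfolds as $n\hat c_{n,\alpha_n,r_0}/\mu(S) = \bigl(\mu(C_{r_0}(\mathcal X_n))/\mu(S)\bigr)\bigl(u_{\alpha_n} + \log n + (d-1)\log\log n + \log\beta\bigr)$. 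Standard consistency of the $r_0$-convex hull sends $\mu(C_{r_0}(\mathcal X_n))/\mu(S)$ to $1$ in probability; because $\alpha_n \to 0$ one has $u_{\alpha_n} = -\log(-\log(1-\alpha_n)) \to +\infty$, and this extra summand makes $\hat c_{n,\alpha_n,r_0} - V_n(S) > 0$ with probability tending to one, so $H_0$ is accepted at $\gamma = r_0$.

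For the upper tail, fix $\gamma_1 = r_0 + \varepsilon$. By Definition \ref{sup}, $C_{\gamma_1}(S) \supsetneq S$, and the geometric step that I expect to be the main obstacle is to extract an open ball $B$ of some radius $\rho = \rho(\varepsilon) > 0$ entirely contained in $C_{\gamma_1}(S) \setminus S$. The hypothesis $(R_\lambda^r)$, through Proposition \ref{rconvexo} and the fact that $C_{\gamma_1}(S)$ is $\gamma_1$-convex and therefore rolls freely outside, should rule out the degenerate case in which $C_{\gamma_1}(S) \setminus S$ is too thin to contain such a ball; this is the technically delicate point. Once $B$ is available, Hausdorff-consistency of $r$-convex hulls of uniform samples (in the spirit of Rodr\'iguez-Casal 2007) implies that, with probability tending to one, a concentric ball $B' \subset B$ of radius $\rho/2$ sits inside $C_{\gamma_1}(\mathcal X_n)$; since $B \cap S = \emptyset$ forces $B' \cap \mathcal X_n = \emptyset$, we conclude $\hat V_{n,\gamma_1} \geq c(\rho) > 0$ for a constant depending only on $\rho$ and $d$. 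The monotonicity $C_{\gamma_1}(\mathcal X_n) \subset C_\gamma(\mathcal X_n)$ for $\gamma \geq \gamma_1$ promotes this lower bound uniformly to every $\gamma \geq \gamma_1$. Simultaneously, the deterministic bound $\hat c_{n,\alpha_n,\gamma} \leq \mu(H(\mathcal X_n))(u_{\alpha_n} + \log n + (d-1)\log\log n + \log\beta)/n$ tends to zero, because the hypothesis $\log(\alpha_n)/n \to 0$ forces $u_{\alpha_n}/n \to 0$ while $\mu(H(\mathcal X_n)) \leq \mu(H(S))$ stays bounded. Hence, for $n$ large, $\hat V_{n,\gamma} > \hat c_{n,\alpha_n,\gamma}$ for every $\gamma \geq \gamma_1$ at once, the acceptance set of $H_0$ lies inside $(0,\gamma_1)$, and $\hat r_0 \leq r_0 + \varepsilon$ with probability tending to one, completing the proof.
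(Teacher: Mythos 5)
Your overall architecture is the same as the paper's (accept at $\gamma=r_0$ using $\hat V_{n,r_0}\le V_n(S)$, Janson's theorem and the diverging quantile $u_{\alpha_n}$; reject uniformly for all $\gamma\ge r_0+\varepsilon$ using an empty ball of fixed volume plus critical values tending to zero), but the two steps you yourself flag as delicate are exactly where the real content lies, and neither is established. The existence of a ball of positive radius inside $C_{\gamma_1}(S)\setminus S$ is not something that ``should'' follow from $(R_\lambda^r)$; it is the paper's Lemma \ref{puntointerior}, proved by a dichotomy: either some $s\in\partial S$ lies in $\interior(C_{\gamma_1}(S))$, and then the $r_0$-rolling ball at $s$ intersected with a small ball around $s$ inside $C_{\gamma_1}(S)$ produces the required ball, or else $\partial S\subset\partial C_{\gamma_1}(S)$, in which case the $\gamma_1$-rolling property of the $\gamma_1$-convex set $C_{\gamma_1}(S)$ transfers to $S$, and Proposition \ref{rconvexo} (this is where the $\lambda$-rolling of $S^c$ enters) forces $S$ to be $\gamma_1$-convex, contradicting $S\subsetneq C_{\gamma_1}(S)$. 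Your sketch gestures at these ingredients but never rules out the second, ``thin'' case.

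More seriously, the passage from $B_\rho(x)\subset C_{\gamma_1}(S)\setminus S$ to $B_{\rho/2}(x)\subset C_{\gamma_1}(\mathcal X_n)$ with probability tending to one cannot be justified by citing Hausdorff-consistency of $r$-convex hulls: the results of Rodr\'iguez-Casal (2007) and Walther (1997) bound $d_H(S,C_r(\mathcal X_n))$ when $S$ itself is $r$-convex, whereas here $S$ is \emph{not} $\gamma_1$-convex (that is the whole point, since $\gamma_1>r_0$), and $\mathcal X_n$ is not a sample on $C_{\gamma_1}(S)$; no off-the-shelf theorem says $C_{\gamma_1}(\mathcal X_n)$ fills out $C_{\gamma_1}(S)$ from inside, and the inclusion you want does not follow from $d_H(S,\mathcal X_n)\to 0$ alone when the same radius is used on both sides. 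The paper closes this gap in Lemma \ref{lem_spacing_minimo} by introducing an intermediate radius $r^*\in(r_0,\gamma_1)$: the empty ball is extracted from $C_{r^*}(S)\setminus S$, Walther's Lemma 3 gives $S\oplus r^*B\subset \mathcal X_n\oplus\gamma_1 B$ eventually almost surely, and the erosion identities yield $C_{r^*}(S)\ominus(\gamma_1-r^*)B\subset C_{\gamma_1}(\mathcal X_n)$, which contains the shrunk ball; you need this argument or an equivalent one. A smaller point on the lower tail: plain consistency $\mu(C_{r_0}(\mathcal X_n))/\mu(S)\to 1$ is not quite enough, because this ratio multiplies $\log n+(d-1)\log\log n+\log\beta$ while $u_{\alpha_n}$ may diverge arbitrarily slowly; the paper uses the rate $\mu(C_{r_0}(\mathcal X_n))/\mu(S)=1+O_P((\log n/n)^{2/(d+1)})$ so that the induced error term is negligible before invoking Slutsky.
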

\begin{remark}
We assume that $S$ is not convex only for simplicity in the exposition. If $S$ is convex it can be shown that
$\hat{r}_0$ goes to infinity (which is the value of $r_0$ in this case)
because, with high probability, the test is not rejected for all values of $r$.
\end{remark}
%
%
%
%
%

Once the consistency of the estimator defined in (\ref{r0estimador}) has been proved, it would be
natural to study the behavior of the random set  $C_{\hat{r}_0}(\mathcal{X}_n)$ as an estimator for the support $S$. In particular, if $\lim_{r\rightarrow r_0^+}d_H(S, C_{r}(S))=0$ then
consistency of $C_{\hat{r}_0}(\mathcal{X}_n)$ can be proved easily from Theorem \ref{consistencia}. However, the
consistency can not be guaranteed if $d_H(S, C_{r}(S))$ does not go to zero as $r$ goes to $r_0$ from above (as $\hat{r}_0$ does, see below).
This problem can be solved by considering the estimator
$C_{r_n}(\mathcal{X}_n)$ where $r_n=\nu \hat{r}_0$ with $\nu\in (0,1)$ fixed. This ensures that, for $n$ large enough, with high probability
 $C_{r_n}(\mathcal{X}_n)\subset S$. In fact, Theorem \ref{consistencia2} shows
that $C_{r_n}(\mathcal{X}_n)$ achieves the same convergence rates as the convex hull of the sample for reconstructing convex sets.

\begin{theorem}\label{consistencia2}Let $S\subset \mathbb{R}^{d}$ be a compact, nonconvex and nonempty set
verifying ($R_{\lambda}^r$) and $\mathcal{X}_n$ a uniform and i.i.d sample on $S$. Let $r_0$ be the parameter
defined in the (\ref{maximo2}) and $\hat{r}_{0}$ defined in (\ref{r0estimador}). Let $\{\alpha_n\}\subset (0,1)$ be a sequence converging to zero under
the conditions of Theorem \ref{consistencia}. Let be $\nu \in (0,1)$ and $r_n=\nu \hat{r}_0$. Then,
$$
   d_H(S, C_{r_n}(\mathcal{X}_n))=O_P\left(\frac{\log n}{n}\right)^{\frac{2}{d+1}}.
$$
The same convergence order holds for $d_H(\partial S, \partial C_{r_n}(\mathcal{X}_n))$ and $d_\mu(S\triangle C_{r_n}(\mathcal{X}_n))$.
\end{theorem}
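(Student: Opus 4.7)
The plan is to reduce the statement, via a sandwich/monotonicity argument, to the fixed-$r$ rate theorem of Rodr\'{\i}guez-Casal (2007), which already asserts that $d_H(S,C_r(\mathcal{X}_n)) = O_P((\log n/n)^{2/(d+1)})$ (with the same rate for the boundary and measure metrics) for a deterministic $r$ strictly below the optimal radius. The only extra work is to show that the random $r_n=\nu\hat{r}_0$ can be pinned between two such deterministic values with probability tending to one.

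First, I would fix $\epsilon>0$ small enough so that $r_1:=\nu r_0-\epsilon>0$ and $r_2:=\nu r_0+\epsilon<r_0$ (this is possible because $\nu\in(0,1)$, and this cushion is precisely why the factor $\nu$ was introduced). By Theorem \ref{consistencia}, $\hat{r}_0\to r_0$ in probability, so the event $E_n=\{r_n\in[r_1,r_2]\}$ satisfies $\mathbb{P}(E_n)\to 1$. All subsequent estimates are carried out on $E_n$, which is legitimate for an $O_P$ conclusion.

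Second, I would use the monotonicity of the $r$-convex hull: for $0<r<s$, any ball $B_s(x)$ disjoint from a set $A$ contains, through any prescribed point of $B_s(x)$, a ball $B_r(x')$ also disjoint from $A$, so $C_r(A)\subset C_s(A)$. Applying this with $A=\mathcal{X}_n$ on $E_n$ yields
\[
C_{r_1}(\mathcal{X}_n)\subset C_{r_n}(\mathcal{X}_n)\subset C_{r_2}(\mathcal{X}_n).
\]
Since $r_2<r_0$, Proposition \ref{maximo} together with the same monotonicity gives $C_{r_2}(S)=S$, so from $\mathcal{X}_n\subset S$ we conclude $C_{r_n}(\mathcal{X}_n)\subset C_{r_2}(\mathcal{X}_n)\subset S$. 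Consequently the two-sided Hausdorff distance collapses to a one-sided supremum:
\[
d_H(S,C_{r_n}(\mathcal{X}_n))=\sup_{s\in S}d(s,C_{r_n}(\mathcal{X}_n))\le \sup_{s\in S}d(s,C_{r_1}(\mathcal{X}_n))=d_H(S,C_{r_1}(\mathcal{X}_n)),
\]
and similarly $d_\mu(S\triangle C_{r_n}(\mathcal{X}_n))=\mu(S\setminus C_{r_n}(\mathcal{X}_n))\le\mu(S\setminus C_{r_1}(\mathcal{X}_n))$.

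Third, I would invoke the fixed-radius theorem of Rodr\'{\i}guez-Casal (2007): under the standing $(R_\lambda^r)$ assumption, $S$ is $r_1$-convex and both $S$ and $S^c$ roll freely with radius $r_1$, so
\[
d_H(S,C_{r_1}(\mathcal{X}_n))=O_P\!\left(\tfrac{\log n}{n}\right)^{\!2/(d+1)},
\]
and the same rate holds for the boundary Hausdorff distance and for the symmetric-difference measure. Combining with the sandwich of Step 2 and the fact that $\mathbb{P}(E_n^c)\to 0$ yields the three claimed $O_P$ rates. The main (and essentially only) obstacle is ensuring the monotone sandwich $r_1\le r_n\le r_2<r_0$ on a high-probability event, since without the buffer provided by $\nu<1$ one could not guarantee $C_{r_n}(\mathcal{X}_n)\subset S$ and the reduction to Rodr\'{\i}guez-Casal (2007) would fail; this is exactly the technical reason for shrinking $\hat r_0$ by the factor $\nu$.
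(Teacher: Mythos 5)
Your overall strategy is the same as the paper's: sandwich the random radius $r_n=\nu\hat r_0$ between deterministic radii on a high-probability event, use monotonicity of the $r$-convex hull to get $C_{\text{low}}(\mathcal{X}_n)\subset C_{r_n}(\mathcal{X}_n)\subset S$, and transfer the fixed-radius rate of Rodr\'iguez-Casal (2007). However, there is a genuine gap in the way you invoke that fixed-radius theorem. Its hypothesis is $(R_{\widetilde r}^{\widetilde r})$, i.e.\ \emph{both} $S$ and $S^c$ must satisfy the rolling condition with the radius at which you build the hull. Under the standing assumption $(R_\lambda^r)$, rolling is only inherited \emph{downward}: $S$ rolls with any radius $\le r$ (and, via $r_0$-convexity, with any radius $\le r_0$), but $S^c$ is only guaranteed to roll with radii $\le\lambda$. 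Your anchor radius $r_1=\nu r_0-\epsilon$ need not satisfy $r_1\le\lambda$ (nor $r_1\le r$): since $r_0\ge r$ and $r_0$ can be much larger than $\lambda$, one can easily have $\nu r_0>\lambda$ --- e.g.\ for the annulus $B_{0.35}[c]\setminus B_{0.15}(c)$ one has $r_0=0.15$ but the inner rolling radius is at most $0.1$, so with $\nu=0.95$ your $r_1$ exceeds $\lambda$. In that situation the claim ``both $S$ and $S^c$ roll freely with radius $r_1$'' is false, $(R_{r_1}^{r_1})$ fails, and the rate theorem cannot be applied at $r_1$. ($S$ being $r_1$-convex is true, but it is not enough.)

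The repair is exactly the paper's choice: fix a deterministic $\widetilde r$ with $\widetilde r\le\min\{r,\lambda\}$ \emph{and} $\widetilde r<\nu r_0$ (such an $\widetilde r$ always exists), so that $(R_{\widetilde r}^{\widetilde r})$ does hold and the event $\{\widetilde r\le r_n\le r_0\}$ still has probability tending to one by Theorem \ref{consistencia}. With that single change your sandwich argument --- including the upper bound $C_{r_n}(\mathcal{X}_n)\subset C_{r_0}(S)=S$ via Proposition \ref{maximo} and hull monotonicity, and the reduction of $d_H$ and $d_\mu$ to the one-sided quantities --- is correct and coincides with the paper's proof; note also that you do not actually need the two-sided pinning $r_n\in[\nu r_0-\epsilon,\nu r_0+\epsilon]$, only $\widetilde r\le r_n\le r_0$.
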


\begin{remark}
The selector proposed by Mandal and Murthy (1997), $r_n^{MM}$, goes to zero in probability. In Pateiro-L\'opez
and Rodr\'{\i}guez-Casal (2013) it is proved that, for a deterministic sequence of
parameters $d_n$ ($d_n\leq r_0$ and $d_n^2 n/\log (n)\to \infty$), the convergence rate (in probability) for the distance in measure
is, for the bidimensional case, $d_n^{-1/3}n^{-2/3}$.
This is the convergence rate of the new proposal plus a penalizing term $d_n^{-1/3}$ which goes to infinity if $d_n\to 0$. It is expected that this penalizing
factor, $(r_n^{MM})^{-1/3}$ also appears
for the the Mandal and Murthy's proposal.
\end{remark}

\section{Numerical aspects of the algorithm}\label{numerical}

The practical implementation of this method requires considering some numerical aspects in order to detail it completely.

With probability one, for $n$ large enough, the existence of the estimator defined in (\ref{r0estimador}) is guaranteed under the hypotheses of
Theorem \ref{consistencia}. However, in practise, this estimator might not exist for a specific sample $\mathcal{X}_n$ and a given value of
the significance level $\alpha$. Therefore, the influence of $\alpha$ must be taken into account. The null hypothesis
will be (incorrectly) rejected  on $C_r(\mathcal{X}_n)$ for $0< r\leq r_0$ with
probability $\alpha$ approximately. This is not important from the theoretical point of view, since we are assuming that $\alpha=\alpha_n$ goes to zero as
the sample size increases. But, what to do if, for a given sample, we reject $H_0$ for {\it all} $r$ (or at least {\it all} reasonable values of $r$)? In order to fix a minimum acceptable
value of $r$, it is assumed that $S$ (and, hence, the estimator) will
have no more than $C$ cycles. Too split estimators will not be considered even in the case that we reject $H_0$ for all $r$. The minimum value
that ensures a number of cycles not greater than $C$ will be taken in this latter case, see below.

Dichotomy algorithms can be used to compute $\hat{r}_0$. The practitioner
must select a maximum number of iterations $I$ and two initial points $r_m$ and $r_M$ with $r_m<r_M$ such that the null hypothesis of uniformity
is rejected and accepted on $C_{r_M}(\mathcal{X}_{n})$ and $C_{r_m}(\mathcal{X}_{n})$, respectively. According to the
previous comments, it is assumed that the number of cycles of $C_{r_m}(\mathcal{X}_n)$ must not be greater than $C$. Choosing a value close
enough to zero is usually sufficient to select $r_m$. However, if selecting this $r_m$ is not possible because, for very low and positive values of $r$, the hypothesis
of uniformity is still rejected on $C_{r}(\mathcal{X}_{n})$ then $r_0$ is estimated as the positive closest value to zero $r$ such
that the number of cycles of $C_r(\mathcal{X}_n)$ is smaller than or equal to $C$. On the other hand, if the hypothesis of uniformity
is accepted even on $H(\mathcal{X}_{n})$ then we propose $H(\mathcal{X}_n)$ as the estimator for the support.

To sum up, the next inputs should be given: the significance level $\alpha\in (0,1)$, a maximum number of iterations $I$, a maximum number of cycles $C$ and two initial
values $r_m$ and $r_M$. Given these parameters $\hat{r}_0$ will be computed as follows:  \vspace{.08cm}

\begin{enumerate}
\item In each iteration and while the number of them is smaller than $I$:\vspace{.08cm}
    \begin{enumerate}
    \item $r=(r_m+r_M)/2.$\vspace{.08cm}
    \item If the null hypothesis is not rejected on $C_{r}(\mathcal{X}_n)$ then $r_m=r$.\vspace{.08cm}
    \item Otherwise, $r_M=r$.\vspace{.08cm}
    \end{enumerate}
\item Then, $\hat{r}_0=r_m$.\vspace{.08cm}
\end{enumerate}


According to the correction of the bias proposed by Ripley and
Rasson (1977) for the convex hull estimator, Berrendero et al. (2012) suggested rejecting
the null hypothesis of uniformity when
$$
    \hat{V}_{n,r}>\frac{\mu(S_n)(u_\alpha+\log{n}+(d-1)\log{\log{n}}+\log{\beta})}{n-v_n},
$$
where  $v_n$ denotes the number of vertices of $S_n=C_{r}(\mathcal{X}_n)$ (points of $\mathcal{X}_n$ that belong to $\partial S_n$). In this work, it is
proposed to redefine the critical region as
$$
    \hat{V}_{n,r}>\hat{c}_{n,\alpha,r}^*,
$$where $\hat{c}_{n,\alpha,r}^*$ is equal to
$$
\small{\frac{\mu(S_n)(u_\alpha+\log{(n-v_n)}+(d-1)\log{\log{(n-v_n)}}+\log{\beta})}{n-v_n}},
$$
that is, we suggest to replace $n$ by $n-v_n$ in the definition of $\hat{c}_{n,\alpha,r}$ elsewhere not only in the denominator.
Although the main theoretical results in Section \ref{soooooo} are established in terms of $\hat{c}_{n,\alpha,r}$ instead of $\hat{c}_{n,\alpha,r}^*$, the proofs
are completely analogous in both cases since $v_n$ is negligible with respect to $n$ see, for instance, the
upper bound for the expected number of vertices in Theorem 3 by Pateiro-L\'opez and Rodr\'iguez-Casal (2013).

Some technical aspects related to the computation of the
maximal spacings must be also considered. Testing the null hypothesis of uniformity is a procedure repeated $I$ times in this algorithm. This may seem to be very computing intensive
since the test involves calculating the maximal spacing.
However, we do not need to know the exact value of the maximal spacing since we are not interested in computing
the test statistic. 
In fact, it is only necessary to check if, for a fixed $r$,  $C_{r}(\mathcal{X}_n)$ contains an open ball, that does not intersect the sample points
with volume greater than the test's critical value $\hat{c}_{n,\alpha,r}^*$. In other words, we will simply
check if an open ball of radius equal to $\hat{c}_{n,\alpha,r}^*$ and center $x$ is contained in $C_{r}(\mathcal{X}_n)\setminus \mathcal{X}_n$. If this disc exists
then $x\notin B_{\hat{c}_{n,\alpha,r}^*}(\mathcal{X}_n)$ where\vspace{-.05cm}
$$B_{\hat{c}_{n,\alpha,r}^*}(\mathcal{X}_n)=\bigcup_{X_i \in \mathcal{X}_n}B_{\hat{c}_{n,\alpha,r}^*}(X_i)\vspace{-.05cm}$$
is the dilation
of radius $\hat{c}_{n,\alpha,r}^*$ of the sample. Therefore, the centers of the possible maximal balls necessarily
lie outside $B_{\hat{c}_{n,\alpha,r}^*}(\mathcal{X}_n)$. Following Berrendero et al. (2012), to check
if the null hypothesis of uniformity is rejected on $C_{r}(\mathcal{X}_n)$, we will follow the next steps:\vspace{.08cm}
\begin{enumerate}
  \item Determine the set $D(r) = C_{r}(\mathcal{X}_n)\cap \partial B_{\hat{c}_{n,\alpha,r}^*}(\mathcal{X}_n)$. Notice that,
  if $x\in D(r)$ then $B_{\hat{c}_{n,\alpha,r}^*}(x)\cap \mathcal{X}_n=\emptyset$.\vspace{.08cm}
\item Calculate $M(r)=\max\{d(x,\partial C_{r}(\mathcal{X}_n):x\in D(r)\}$.\vspace{.08cm}
  \item If $M(r)\leq \hat{c}_{n,\alpha,r}^*$ then the null hypothesis of uniformity is not rejected.\vspace{.08cm}
\end{enumerate}

It should be noted that $\partial C_{r}(\mathcal{X}_n)$ and $\partial B_{\hat{c}_{n,\alpha,r}^*}(\mathcal{X}_n)$ can be easily computed (at least for the bidimensional case), see Pateiro-L\'opez and Rodr\'iguez-Casal (2010). \vspace{-.15cm}

\section{Simulation study}\label{simulation}

The performances of the algorithm proposed in this paper and Mandal and Murthy (1997)'s method will be analyzed in this section. They will be denoted by RS and MM, respectively. A total of $1000$ uniform samples of four different sizes $n$ have been generated on three support models in the Euclidean space $\mathbb{R}^2$, see Figure \ref{oooofffffdfffg}.

The first set, $S=B_{0.35}[(0.5,0.5)]\setminus B_{0.15}((0.5,0.5))$, is a circular ring with $r_0=0.15$. The other two ones are
two interesting sets, $S=\textbf{\large{C}}$ and $S=\textbf{\large{S}}$ with $r_0=0.2$ and $r_0=0.0353$, respectively. The values
of $n$ considered are $n=100$, $n=500$, $n=1000$ and $n=1500$. In addition, four values
for $\alpha$ have been taken into account, $\alpha_1=10^{-1}$, $\alpha_2=10^{-2}$, $\alpha_3=10^{-3}$ and $\alpha_4=10^{-4}$. The maximum number of cycles $C$ was fixed equal to $4$.

\begin{figure}
\hspace{-1.9cm}\begin{picture}(-160,150)
\put(29,30){\includegraphics[width=5.1 cm,height=5.3cm]{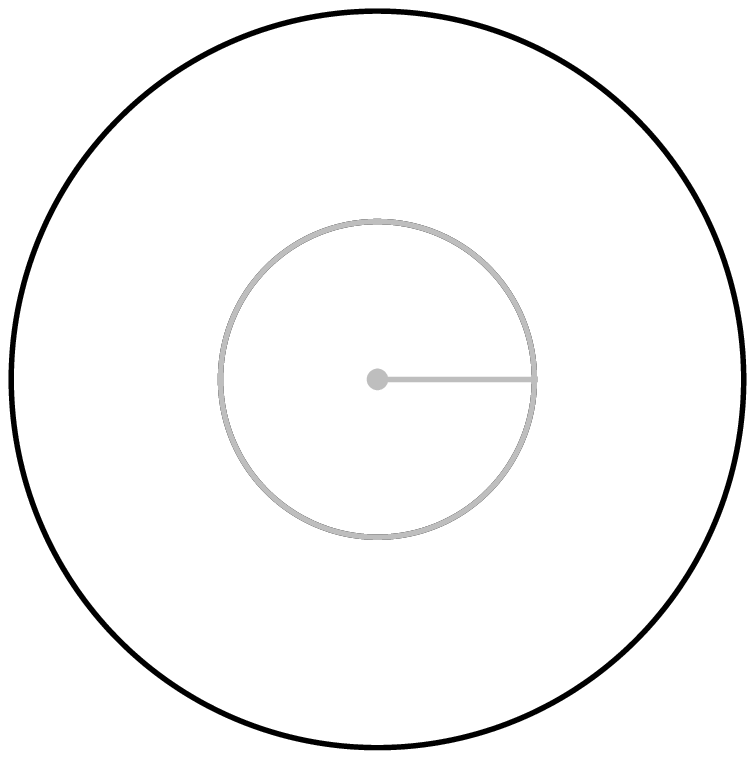}}
\put(313,30){\includegraphics[width=5.1 cm,height=5.3cm]{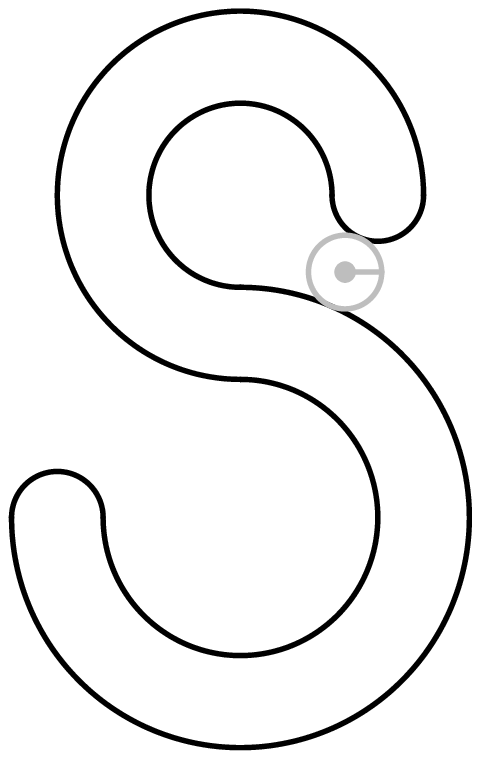}}
\put(171,30){\includegraphics[width=5.1 cm,height=5.3cm]{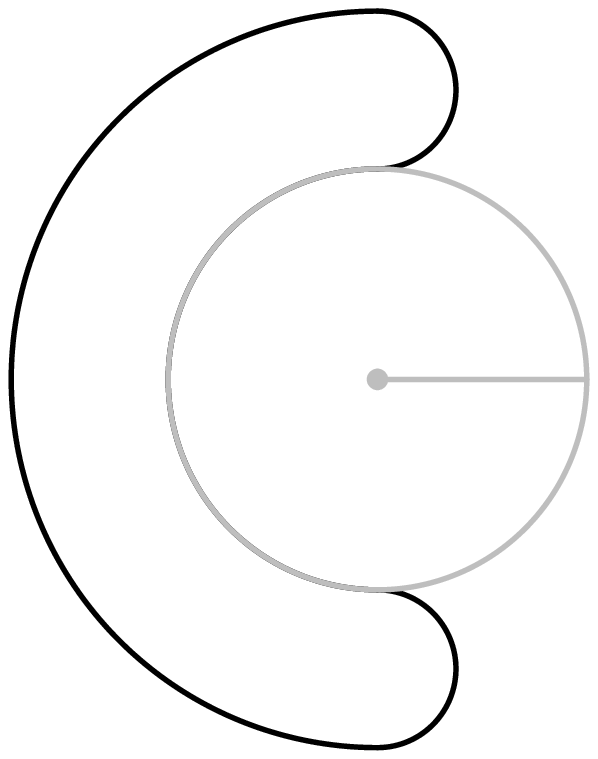}}
\put(29,-90){\includegraphics[width=5.1 cm,height=5.3cm]{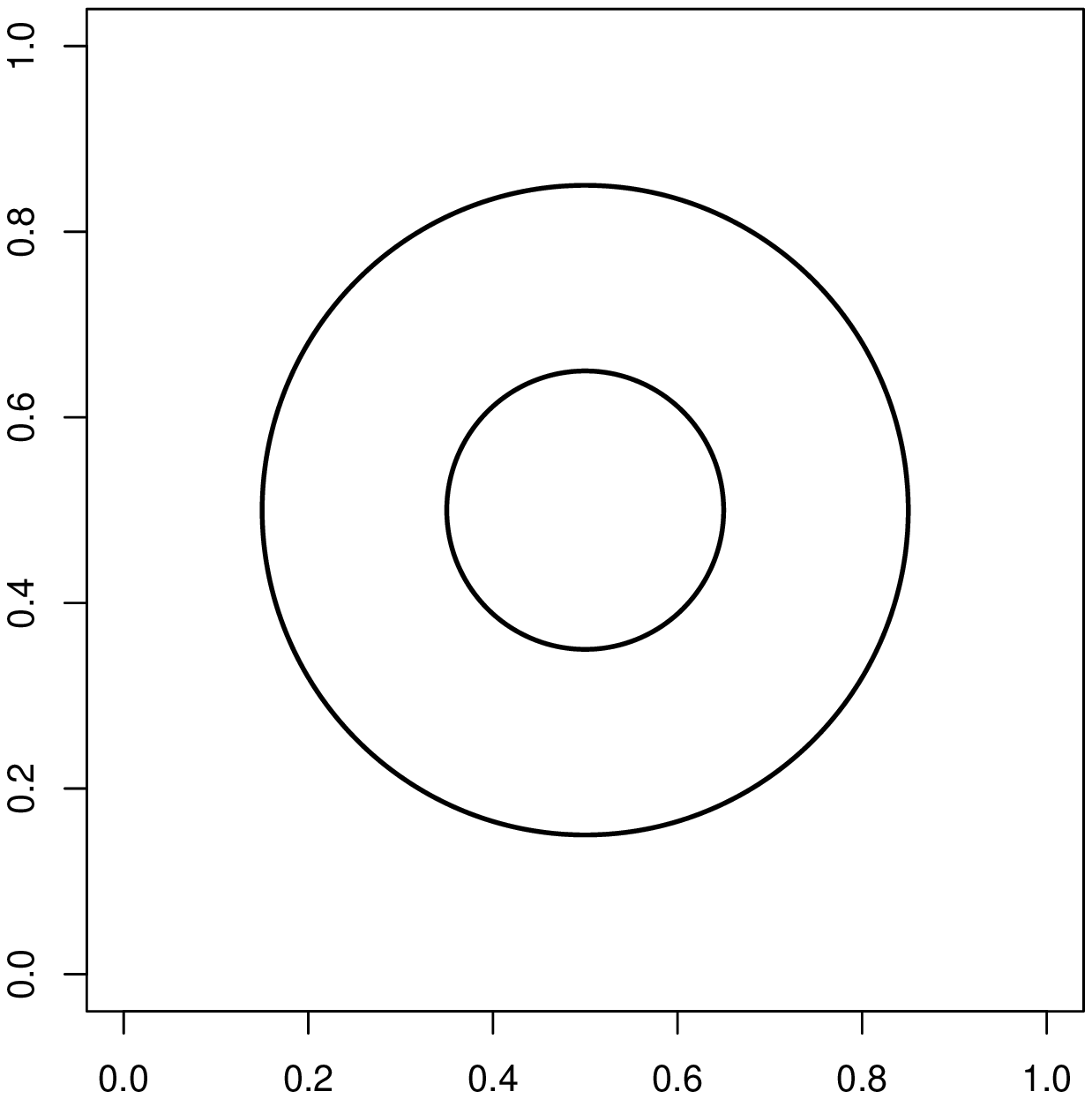}}
\put(313,-90){\includegraphics[width=5.1 cm,height=5.3cm]{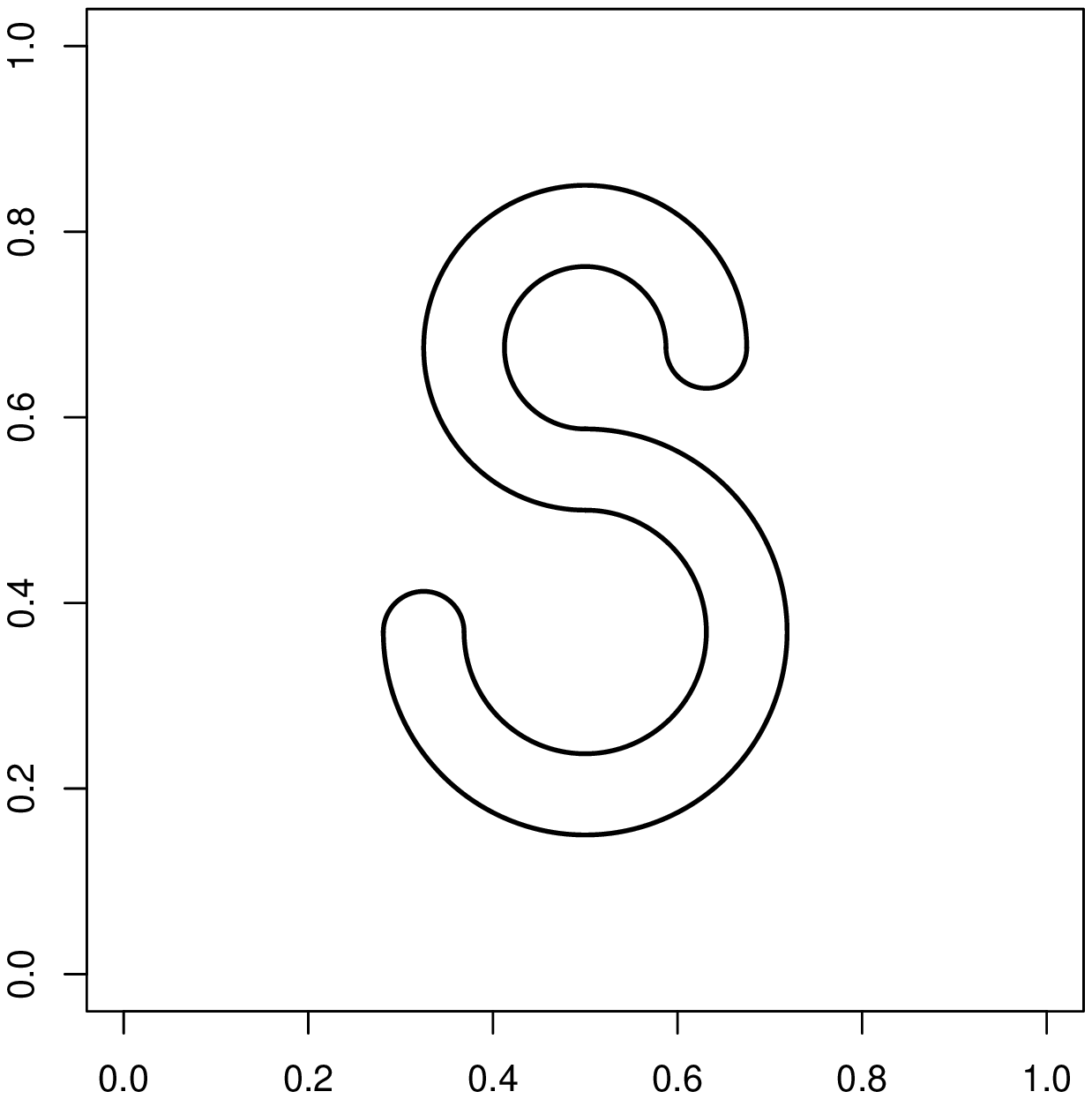}}
\put(171,-90){\includegraphics[width=5.1 cm,height=5.3cm]{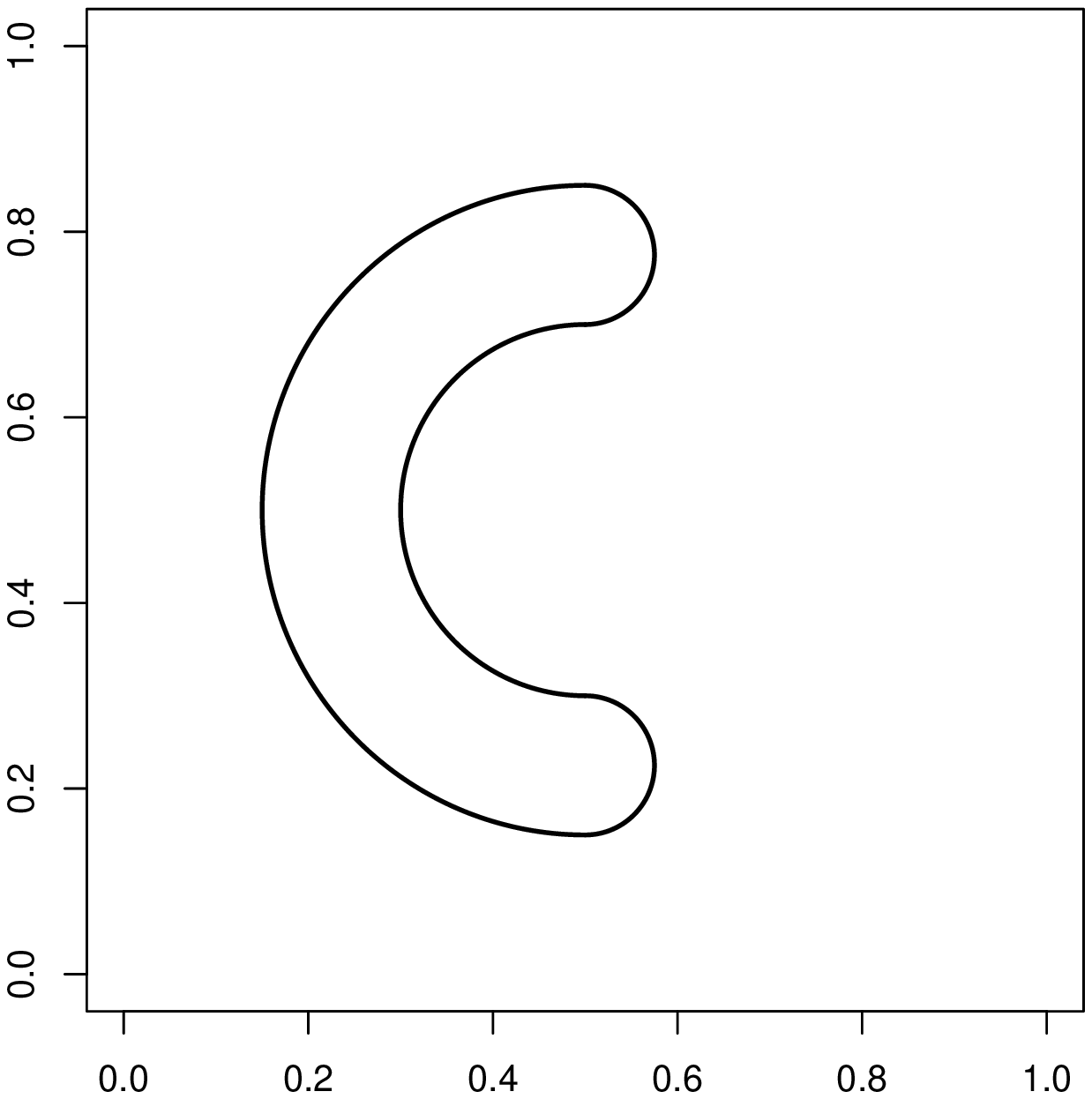}}
\put(241,167){(b)}
\put(101,167){(a)}
\put(383,167){(c)}
\end{picture}
$$ $$
$$ $$
$$ $$
$$\vspace{.5mm} $$
\caption{(a) $S=B_{0.35}[(0.5,0.5)]\setminus B_{0.15}((0.5,0.5))$ with $r_0=0.15$. (b) $S=\textbf{\large{C}}$ with $r_0=0.2$. (c) $S=\textbf{\large{S}}$ with $r_0=0.0353$.}\label{oooofffffdfffg}
\end{figure}

For each fixed random sample, both estimators of the smoothing parameter of the $r-$convex hull have
been calculated. So, one thousand estimations have been obtained for each algorithm, fixed a model and the values of $n$ and $\alpha$. The empirical means
of these one thousand estimations are showed in Tables \ref{1111111b}, \ref{22222222b} and \ref{333333333b} for the RS and MM methods. We should mention that MM method is included in these table only for illustrative purposes. The results of these two algorithms are not directly comparable since the goal of MM is not to estimate the parameter $r_0$ defined in (\ref{maximo2}). However, comparing the behavior of the two resulting support estimators can be really interesting. Tables \ref{333333333c}, \ref{333333333cc} and \ref{333333333ccc} contain the empirical means of one thousand Monte Carlo estimations for the distance in measure between the RS and MM support estimators and the corresponding theoretical models, respectively. In addition, we have estimated the distance in measure between the $r_0-$convex hull of each sample and its corresponding support model for the different sample sizes. The means of these estimations can be considered as a reference value. They are showed (multiplied by $10$) in the last row of Tables \ref{333333333c}, \ref{333333333cc} and \ref{333333333ccc}. A grid of $334^2$ points was considered in the unit square for estimating the distance in measure. The parameter $\nu$ was fixed equal to $0.95$ for calculating the RS support estimator.

\begin{table}[h!]
\caption{Empirical means of $1000$ RS and MM estimations for the smoothing parameter of the $r-$convex hull with $S=B_{0.35}[(0.5,0.5)]\setminus B_{0.15}((0.5,0.5))$. In this case, $r_0=0.15$.}\label{1111111b}
 \begin{center}
\begin{tabular}{ccrrrr}
    &  $n$& $100$&$500$&$1000$ &$1500$\\
        \hline
     \multirow{4}{*}{RS}& & & & & \\
      & $\alpha_1=10^{-1}$&  0.1592  &  0.1456 &  0.1438   & 0.1410    \\
      & $\alpha_2=10^{-2}$& 0.1592   &  0.1509 & 0.1499   &  0.1495 \\
      & $\alpha_3=10^{-3}$& 0.1592    & 0.1516 & 0.1507   & 0.1503 \\
      & $\alpha_4=10^{-4}$&  0.1592  &  0.1517   &  0.1507    &  0.1504  \\
     MM& &   0.1969  &  0.1295 &   0.1084 &  0.0977  \\
    \hline
\end{tabular}
 \end{center}
  \end{table}

  \begin{table}[h!]
  \caption{Empirical means of $1000$ RS and MM estimations for the smoothing parameter of the $r-$convex hull with $S=\textbf{\large{C}}$. In this case, $r_0=0.2$.}\label{22222222b}
 \begin{center}
\begin{tabular}{ccrrrr}
    &  $n$& $100$&$500$&$1000$ &$1500$\\
        \hline
     \multirow{4}{*}{RS}& & & & & \\
      & $\alpha_1=10^{-1}$&  0.2724   &   0.2007   & 0.1903   & 0.1888  \\
      & $\alpha_2=10^{-2}$&  0.2929 &  0.2150   &  0.2056  & 0.2032    \\
      & $\alpha_3=10^{-3}$&   0.2982   &  0.2188   & 0.2089    &  0.2055  \\
      & $\alpha_4=10^{-4}$&  0.2988  &  0.2226  &   0.2105 &  0.2068 \\
     MM& &    0.1636  &   0.1072  &  0.0897   &   0.0809 \\
    \hline
\end{tabular}
 \end{center}\end{table}

 \begin{table}[h!]
 \caption{Empirical means of $1000$ RS and MM estimations for the smoothing parameter of the $r-$convex hull with $S=\textbf{\large{S}}$. In this case, $r_0= 0.0353$.}\label{333333333b}
 \begin{center}
\begin{tabular}{ccrrrr}
   &  $n$& $100$&$500$&$1000$ &$1500$\\
        \hline
     \multirow{4}{*}{RS}& & & & & \\
      & $\alpha_1=10^{-1}$& 0.0954 &   0.0833  & 0.0637  &  0.0548   \\
      & $\alpha_2=10^{-2}$&  0.0954   &  0.0878    &  0.0695  & 0.0602   \\
      & $\alpha_3=10^{-3}$& 0.0958  &  0.0886  &  0.0736  &  0.0631  \\
      & $\alpha_4=10^{-4}$& 0.1077  & 0.0887 & 0.0778  &  0.0659   \\
     MM& &   0.1644 & 0.1055 & 0.088 &  0.0792  \\
    \hline
\end{tabular}
 \end{center}\end{table}

\newpage

 \begin{table}[h!]
  \caption{Empirical means of $1000$ estimations (multiplied by $10$) obtained for the distance in measure between $S=B_{0.35}[(0.5,0.5)]\setminus B_{0.15}((0.5,0.5))$ and the resulting support estimators for RS and MM methods. The last row contains the benchmarks (multiplied by $10$) for each sample size.}\label{333333333c}
 \begin{center}
\begin{tabular}{ccrrrr}
    &  $n$& $100$&$500$&$1000$ &$1500$\\
        \hline
     \multirow{4}{*}{RS}& & & & & \\
      & $\alpha_1=10^{-1}$&    0.9288   & 0.3293  &  0.2085    &       0.1623  \\
      & $\alpha_2=10^{-2}$&  0.9288   &  0.3143   & 0.1970    &   0.1492    \\
      & $\alpha_3=10^{-3}$& 0.9294     &    0.3123  & 0.1957  &   0.1484  \\
      & $\alpha_4=10^{-4}$&  0.9288    &   0.3122   &   0.1957  &   0.1483    \\
     MM& &    1.4165   &     0.3378  &   0.2316   &  0.1837  \\
    \hline
        & &   0.9337  & 0.2956 &0.1819&  0.1364 \\
      \end{tabular}
 \end{center}
 \end{table}

 $ $

 $ $

 $ $

 $ $

 $ $

 $ $

 \begin{table}[h!]
 \caption{Empirical means of $1000$ estimations (multiplied by $10$) obtained for the distance in measure between $S=\textbf{\large{C}}$ and the resulting support estimators for RS and MM methods. The last row contains the benchmarks (multiplied by $10$) for each sample size.}\label{333333333cc}
 \begin{center}
\begin{tabular}{ccrrrr}
    &  $n$& $100$&$500$&$1000$ &$1500$\\
        \hline
     \multirow{4}{*}{RS}& & & & & \\
      & $\alpha_1=10^{-1}$&  0.6041   &  0.1472  &    0.0920  &   0.0712    \\
      & $\alpha_2=10^{-2}$&   0.6677    &   0.1589 &  0.0833 &  0.0640   \\
      & $\alpha_3=10^{-3}$&   0.6820  &    0.1953  &   0.0832  & 0.0631    \\
      & $\alpha_4=10^{-4}$&   0.6837  &  0.2440  &    0.0865  &  0.0626    \\
     MM& &     0.4145   &    0.1681  &    0.1125 &  0.0885  \\
    \hline
        & &    0.3727  & 0.1277 &  0.0800&  0.0606 \\
\end{tabular}
 \end{center}
 \end{table}

 $ $

 \newpage

 \begin{table}[h!]
  \caption{Empirical means of $1000$ estimations (multiplied by $10$) obtained for the distance in measure between $S=\textbf{\large{S}}$ and the resulting support estimators for RS and MM methods. The last row contains the benchmarks (multiplied by $10$) for each sample size.}\label{333333333ccc}
 \begin{center}
\begin{tabular}{ccrrrr}
    &  $n$& $100$&$500$&$1000$ &$1500$\\
        \hline
     \multirow{4}{*}{RS}& & & & & \\
      & $\alpha_1=10^{-1}$&0.6389    &  0.2591     &   0.1842  &  0.1485  \\
      & $\alpha_2=10^{-2}$& 0.6389     & 0.2537 &    0.1821  &  0.1455    \\
      & $\alpha_3=10^{-3}$& 0.6411  & 0.2530     &   0.1821  & 0.1464   \\
      & $\alpha_4=10^{-4}$&0.6797     & 0.2529   & 0.1816   &    0.1476    \\
     MM& &  1.2319  &  0.4851 &  0.2445 & 0.1514 \\
    \hline
        & &   1.0794  &  0.3320 &   0.2038&   0.1541  \\
\end{tabular}
 \end{center}\end{table}

$ $

$ $

\begin{figure}[h!]\hspace{-1.9cm}
\begin{picture}(10,150)
\put(30,20){\includegraphics[width=5cm,height=5cm]{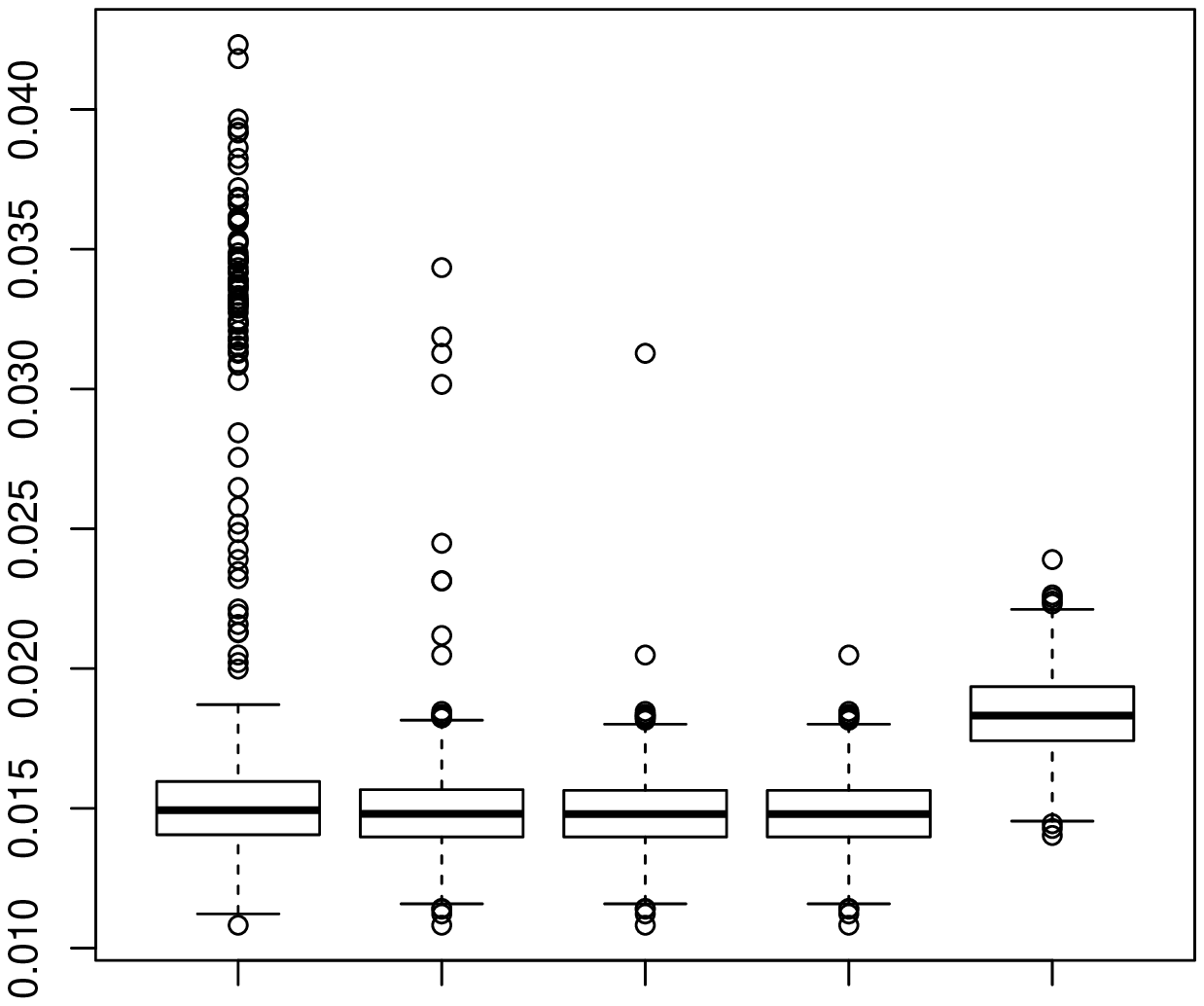}}
\put(172,20){\includegraphics[width=5cm,height=5cm]{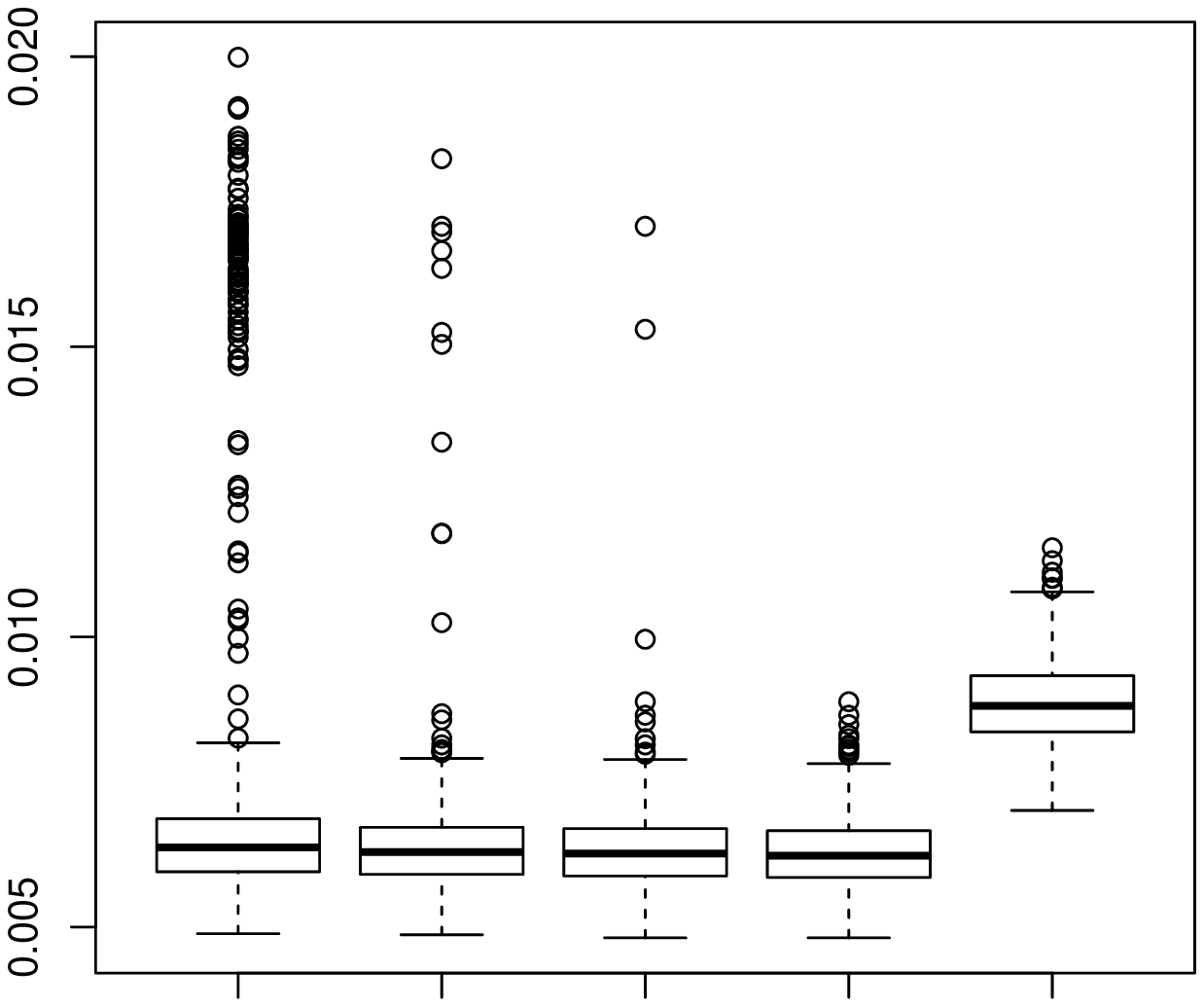}}
\put(314,20){\includegraphics[width=5cm,height=5cm]{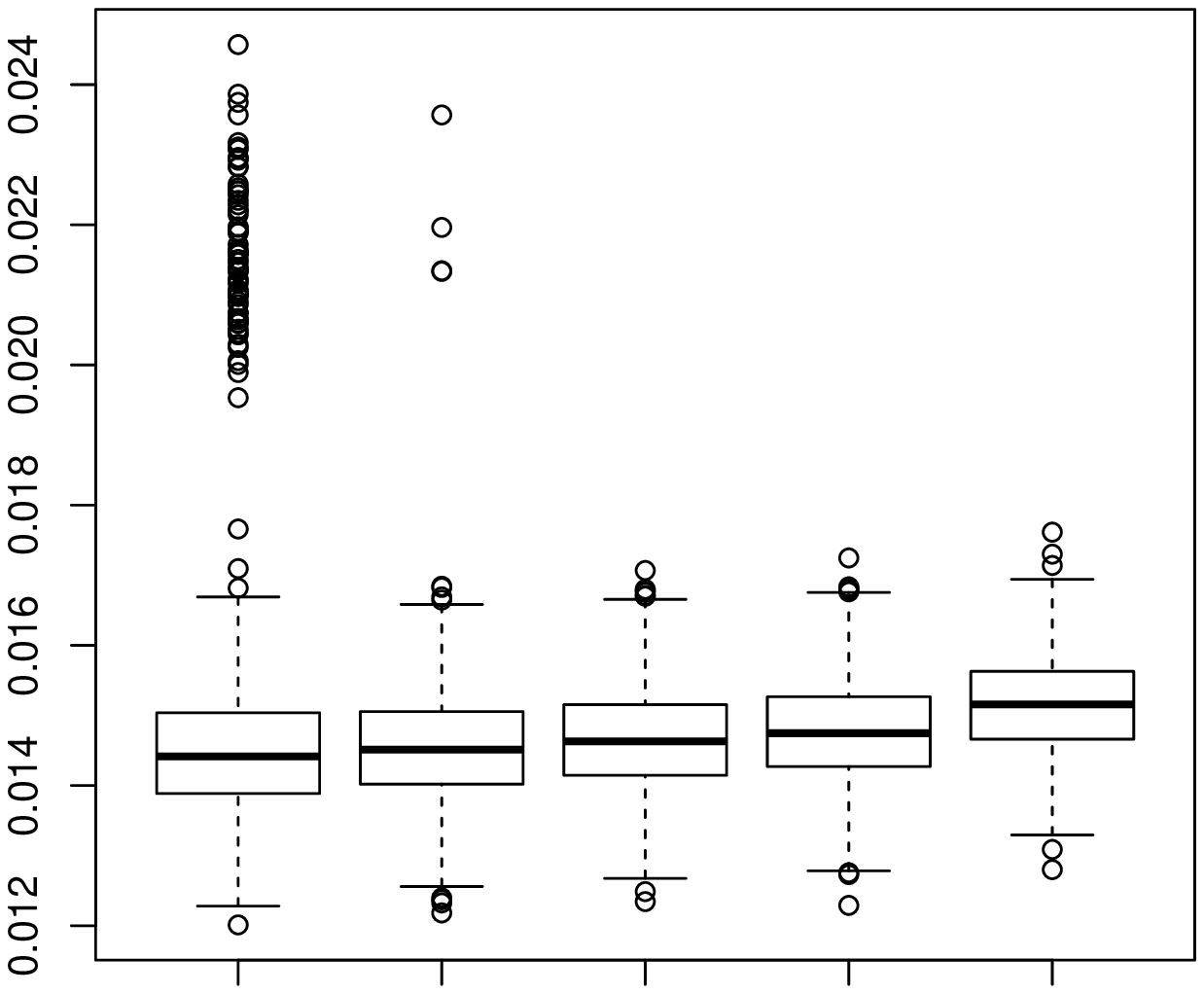}}
\put(100,160){(a)}
\put(242,160){(b)}
\put(385,160){(c)}
\end{picture}\vspace{-1. cm}
\caption{Boxplots of the estimations for the distance in measure for RS and MM methods when $n=1500$ for (a) $S=B_{0.35}[(0.5,0.5)]\setminus B_{0.15}((0.5,0.5))$, (b) $S=\textbf{\large{C}}$ and (c) $S=\textbf{\large{S}}$. From left to right, RS considering $\alpha_1$, $\alpha_2$, $\alpha_3$ and $\alpha_4$ and MM.}\label{oooofffffffuuu1}
\end{figure}

Figure \ref{oooofffffffuuu1} contains the boxplots for the estimations of the distance in measure between the resulting support estimators for the RS and MM methods when $n=1500$.

\emph{Conclusions.} According to the results showed in Tables \ref{1111111b}, \ref{22222222b} and \ref{333333333b}, RS presents a good global behavior for estimating the smoothing parameter of the $r-$convex hull. Only when $S=\textbf{\large{C}}$ and $n=100$, MM provides better results, see Table \ref{22222222b}. In this particular case, the estimations of RS are specially greater than $0.2$, the real value of parameter $r_0$. In general, MM provides too small estimations, mainly for high values of the sample size, see Tables \ref{1111111b} and \ref{22222222b}.\\$\vspace{-1.05cm} $\\

The role of the level of significance $\alpha$ must be also discussed. Taking low values of $\alpha$ reduces the number of outliers considerably for the three support models presented. In addition, if the model considered is not too complex then small
values of $\alpha$ provide better results for $n$ large enough reducing the risk of rejecting
the null hypothesis of uniformity when it is satisfied, see for instance $S=B_{0.35}[(0.5,0.5)]\setminus B_{0.15}((0.5,0.5))$ or $S=\textbf{\large{C}}$ in Tables \ref{1111111b} and \ref{22222222b}. Therefore, excessively low values of $r$ will not be selected. However, if the support model is not so simple then choosing large values of $\alpha$ provides better estimations for the smoothing parameter, see Table \ref{333333333b} for $S=\textbf{\large{S}}$. Anyway, for moderate and large values of the sample size the dependence on $\alpha$ of RS method is small.\\$\vspace{-.8cm} $\\

Finally and according to Tables \ref{333333333c}, \ref{333333333cc} and \ref{333333333ccc}, RS always provides the smallest estimation errors for the criteria considered except when $S=\textbf{\large{C}}$ with $n=100$ or even $n=500$ if the value of $\alpha$ is too large, see Table \ref{333333333cc}. Therefore, RS support estimator is more competitive than MM algorithm. According to the previous comments, it can be seen that the number of outliers for RS increases if large values of $\alpha$ are considered for the three support models, see Figure \ref{oooofffffffuuu1}.

\section{Proofs}\label{prprpr}In this section the proofs of the stated theorems are presented.\vspace{.15cm}\\
\emph{Proof of Proposition \ref{nonece}}\vspace{.15cm}\\
Let us suppose that $r_n\neq \overline{r}$ for all $n$ since that, otherwise, the proof would be trivial. It is verified that

$$\forall a\in\partial A\mbox{ and }\forall n\in \mathbb{N}\mbox{ }\exists x_n \mbox{ such that }a \in B_{r_n}[x_n]\mbox{ and } B_{r_n}(x_n)\cap A=\emptyset.\vspace{1.9mm}$$For each $a\in \partial A$, let us consider the sequence of closed balls $\{B_{r_n}(x_n)\}$. It is not restrictive to assume that $\{r_n\}$ is a monotone increasing sequence. In another case, it would be possible to consider a monotone subsequence of $\{r_n\}$ denoted by $\{r_n\}$ again converging to $\overline{r}$. If a decreasing subsequence was considered, the proof would be trivial. Then, only the increasing case will be considered. Then, $\{r_n\}$ converges to $\overline{r}$ and $\{x_n\}$ converges to $x_a$ since $\{x_n\}$ is bounded and it contains a convergent
subsequence which we denote by $\{x_n\}$ again. Two steps are necessary to get the proof.\vspace{3mm}\\ \underline{Step 1:} It will be proved that for any $a\in\partial A$ it is verified that $B_{\overline{r}}(x_a)\cap A=\emptyset$.
To see this suppose the contrary, that is, let us suppose that there exists $a\in \partial A$ such that $B_{\overline{r}}(x_a)\cap A\neq \emptyset$ . Then, there exists $ \overline{a}\in B_{\overline{r}}(x_a)\cap A$ verifying that $\|\overline{a}-x_a\|<\overline{r}$. \\Since $\{r_n\}\uparrow \overline{r}$,
$$
   \exists n_0 \in \mathbb{N}\mbox{ such that }\|\overline{a}-x_a\|<r_n<\overline{r},\mbox{ }\forall n \geq n_0.\vspace{1.5mm}
$$
So,
\begin{equation}\label{2n}
   \forall n\geq n_0,\mbox{ }\overline{a}\in B_{r_n}(x_a) .\vspace{1.5mm}
\end{equation}Let us define for all $ n\geq n_0$,
$$d_n=d(\overline{a},\partial B_{r_n}(x_a))\vspace{1.5mm}.$$In addition, $\{r_n\}$ is an increasing sequence. Then, it is verified that
$$B_{r_1}(x_a)\subset B_{r_2}(x_a)\subset...\vspace{1.5mm}$$and, as consequence and taking (\ref{2n}) into account,

$$0<d_{n_0}\leq d_{n_1}\leq d_{n_2}\leq ...\vspace{1.5mm}$$Let us consider $d_{n_0}/2$, since $\{x_n\}$ converges to $x_a$,
$$\exists n_1\in \mathbb{N}\mbox{ such that }\|x_a-x_n\|<d_{n_0}/2,\mbox{ }\forall n \geq n_1.$$So,
$$\overline{a}\in B_{r_n}(x_n),\mbox{ }\forall n \geq n_2=\max\{n_0,n_1\}.$$To see this, notice that, if $n\geq n_2$ then
$$\|\overline{a}-x_n\|\leq\|\overline{a}-x_a\|+\|x_a-x_n\|<r_n-d_n+\frac{d_{n_0}}{2}<r_n-d_n+\frac{d_n}{2}<r_n .\vspace{1.5mm}$$This fact is a contradiction since $B_{r_n}(x_n)\cap A =\emptyset$, for all $n$ and $\overline{a}\in A$.\vspace{1.6mm}\vspace{3mm}\\\underline{Step 2:} It will be proved that $a\in B_{\overline{r}}[x_a]$. We will assume that $a\notin B_{\overline{r}}[x_a]$ and we will show that this is impossible under the assumptions we have done. If $a\notin B_{\overline{r}}[x_a]$ then $\|a-x_a\|>\overline{r}$ and it is possible to define $\epsilon=\|a-x_a\|-\overline{r}>0$. Since $\{x_n\}$ converges to $x_a$, there exists $n_0\in \mathbb{N}$ such that $\|x_n-x_a\|<\epsilon$. For all $n\geq n_0$,
 $$\|x_n-a\|\geq \|a-x_a\|-\|x_a-x_n\|>\|a-x_a\|-\epsilon=\overline{r} .$$
Since $\{r_n\}$ is an monotone increasing sequence converging to $\overline{r}$, $a\notin B_{r_n}[x_n]$. This is a contradiction since we are assuming that  $a\in B_{r_n}[x_n]$ for all $n$.\hfill $\Box$\vspace{.15cm}\\
\emph{Proof of Proposition \ref{rconvexo}}\vspace{.15cm}\\Some auxiliary results are necessary. Lemma \ref{a} guarantees the existence of a unit vector for each point belonging to the boundary of the set.
\begin{lemma}\label{a}Let $A\subset \mathbb{R}^{d}$ be a closed and nonempty set such that $A^c$ satisfies the $\lambda-$rolling property. Then, for all $a\in\partial A$ exists $\eta(a)\label{eta2}$ (not necessarily unique) such that $\|\eta(a)\|=1$ and $B_{\lambda}[a-\lambda\eta(a)]\subset A.\vspace{1.4mm}$
\end{lemma}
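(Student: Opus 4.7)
The plan is to unwrap the definition of the rolling property applied to $A^c$ and show that the required center is forced to lie at distance exactly $\lambda$ from $a$.

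First I would note that, since $A$ is closed, $\partial A = \partial A^{c}$, so every $a \in \partial A$ is a boundary point of $A^{c}$. Applying the $\lambda$-rolling property of $A^{c}$ at $a$, I get a point $x \in \mathbb{R}^{d}$ such that $a \in B_{\lambda}[x]$ and $B_{\lambda}(x) \cap A^{c} = \emptyset$, i.e., $B_{\lambda}(x) \subset A$.

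The key step is to argue that $\|a-x\| = \lambda$, not merely $\leq \lambda$. Suppose for contradiction that $\|a-x\| < \lambda$; then $a \in B_{\lambda}(x)$, and since $B_{\lambda}(x)$ is open and contained in $A$, the point $a$ lies in $\interior(A)$, contradicting $a \in \partial A$. Hence $\|a-x\| = \lambda$. Defining $\eta(a) = (a-x)/\lambda$ gives $\|\eta(a)\|=1$ and $x = a - \lambda\eta(a)$.

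Finally, to upgrade the open ball inclusion to the closed ball, I would use that $A$ is closed: from $B_{\lambda}(x) \subset A$ one obtains $B_{\lambda}[x] = \overline{B_{\lambda}(x)} \subset \overline{A} = A$, which is precisely $B_{\lambda}[a-\lambda\eta(a)] \subset A$. There is no real obstacle here; the only subtlety is the tangency step forcing $a$ to lie on the bounding sphere of the rolling ball, and $\eta(a)$ is generally non-unique exactly when several such rolling balls exist at $a$, which is why the statement allows any choice.
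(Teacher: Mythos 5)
Your proof is correct and follows essentially the same route as the paper: invoke the $\lambda$-rolling property of $A^c$ at $a\in\partial A=\partial A^c$, rule out $\|a-x\|<\lambda$ because that would put $a$ in the open ball $B_\lambda(x)\subset A$ and hence in $\interior(A)$, and then set $\eta(a)=(a-x)/\lambda$. Your explicit closure step $B_\lambda[x]=\overline{B_\lambda(x)}\subset\overline{A}=A$ is a slightly tidier justification of an inclusion the paper simply asserts, but the argument is the same.
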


\begin{proof}According to the property of rolling freely for a given $a\in\partial A^c=\partial A$ exists $B_\lambda[x]$ such that $a\in B_\lambda[x]$ and $B_\lambda(x)\cap A^c=\emptyset$. Therefore, $B_\lambda[x]\subset A$ and it is verified that $\|x-a\|=\lambda$. If $\|x-a\|<\lambda$ then $a\in B_{\lambda-\|x-a\|}[x]\subset \interior(A) $ which is a contradiction since that $a\in\partial A$. Then, it is possible to define $\eta(a)=(a-x)/\|a-x\|$. It is verified that $x=a-\lambda\eta(a)$. So, $B_{\lambda}[a-\lambda\eta(a)]\subset A$.\qedhere
\end{proof}

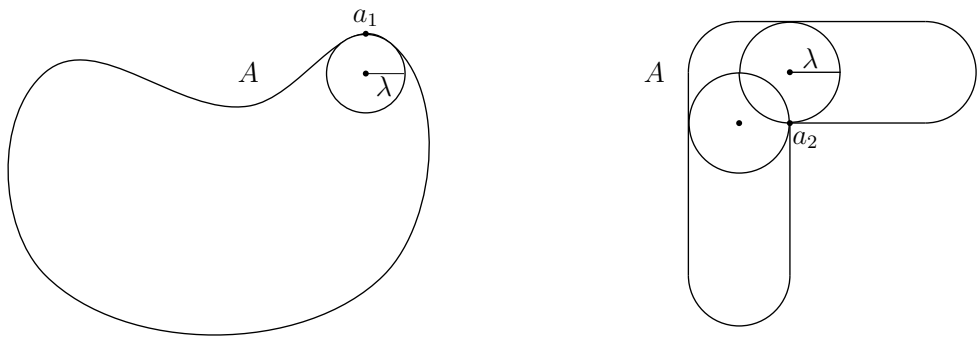
\begin{figure}[h!]\centering
\begin{pspicture}(-.1,0)(14,4.5)
 \scalebox{.9}[0.9]{\psccurve[showpoints=false,fillstyle=solid,fillcolor=white,linecolor=black,linewidth=0.2mm,linearc=3](1,1)(1,4)(4,3.5)(6,4.5)(6,1)
\pscircle[fillstyle=solid,fillcolor=white,linecolor=black,linewidth=0.2mm,linearc=3](5.735,3.98){.59}
\psdots*[dotsize=2.5pt](5.735,3.98)
\psline[linearc=0.25,linecolor=black,linewidth=0.1mm](5.735,3.98)(6.3,3.98)
\rput(6,3.8){$\tiny{\lambda}$}
\psdots*[dotsize=2.5pt](5.736,4.57)
\rput(5.736,4.8){$\tiny{a_1}$}
\psline[linearc=0.25,linecolor=black,linewidth=0.2mm](12,1)(12,3.25)
\psline[linearc=0.25,linecolor=black,linewidth=0.2mm](12,3.25)(14,3.25)
\psarc[showpoints=false,linearc=0.25,linecolor=black,linewidth=0.2mm](14,4.){.75}{-90}{90}
\psarc[showpoints=false,linearc=0.25,linecolor=black,linewidth=0.2mm](11.25,1){.75}{-180}{0}
\psarc[showpoints=false,linearc=0.25,linecolor=black,linewidth=0.2mm](11.25,4){.75}{90}{180}
\psline[linearc=0.25,linecolor=black,linewidth=0.2mm](10.5,1)(10.5,4)
\psline[linearc=0.25,linecolor=black,linewidth=0.2mm](14,4.75)(11.25,4.75)
\pscircle[fillstyle=solid,fillcolor=white,linecolor=black,linewidth=0.2mm,linearc=3](12,4){.75}
\pscircle[fillstyle=solid,fillcolor=white,linecolor=black,linewidth=0.2mm,linearc=3](11.25,3.25){.75}
\psarc[showpoints=false,linearc=0.25,linecolor=black,linewidth=0.2mm](12,4){.75}{180}{270}
\psdots*[dotsize=2.5pt](12,3.25)
\psline[linearc=0.25,linecolor=black,linewidth=0.2mm](12,4)(12.75,4)
\psdots*[dotsize=2.5pt](12,4)
\psdots*[dotsize=2.5pt](11.25,3.25)
\rput(12.23,3){$\tiny{a_2}$}
\rput(4,4){$A$}
\rput(10,4){$A$}
\rput(12.3,4.2){$\lambda$}
}
\end{pspicture}
\caption{A ball of radius $\lambda$ rolls freely in $A$. For $a_1 \in \partial A$ exists a unique $x\in A$ such that $a_1\in B_{\lambda}[x]\subset A$. For $a_2\in \partial A,\mbox{ }a_2\in B_{\lambda}[x]$ for a non finite number of points $x\in A$.}
\label{Figuraaa}
\end{figure}

The vector $\eta(a)$ established in Lemma \ref{a} is not unique necessarily, see Figure \ref{Figuraaa}. Lemma \ref{A01} relates the uniqueness of this unit vector and the existence of some $x\in A$ such that $a$ coincides metric projection of $x$ onto $A$.

\begin{lemma}\label{A01}Let $A\subset \mathbb{R}^{d}$ be a nonempty and closed set and $a \in \partial A$. Let us assume that there exists $x\notin A$ such that $\rho=\|x-a\|=d(x,A)$, that is, $a$ is a metric projection of $x$ onto $A$. If exists $\lambda>0$ and a unit vector $\eta(a)$ such that
$B_{\lambda}[a-\lambda\eta(a)]\subset A$, then $x=a+\rho\eta(a)$.
\end{lemma}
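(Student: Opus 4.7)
\emph{Plan for the proof of Lemma \ref{A01}.}

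The plan is to show that the closed ball $B_\lambda[c]$ inside $A$ (where $c:=a-\lambda\eta(a)$) and the open ball $B_\rho(x)$ outside $A$ meet only at the point $a$, and that this forces $c$, $a$ and $x$ to be collinear in the right order. First I would record the two basic facts. On the one hand, the hypothesis $B_\lambda[a-\lambda\eta(a)]\subset A$ places $a$ on the boundary of this ball, so $\|a-c\|=\lambda$. On the other hand, since $\rho=d(x,A)$, every point of $A$ is at distance at least $\rho$ from $x$, hence $B_\rho(x)\cap A=\emptyset$; in particular $B_\rho(x)\cap B_\lambda[c]=\emptyset$.

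The key step is then to deduce from this emptiness that $\|x-c\|\geq \rho+\lambda$. I would argue by contradiction, splitting into two cases. If $\|x-c\|<\lambda$, then $x\in B_\lambda(c)\subset A$, contradicting $x\notin A$. Otherwise $\|x-c\|\geq \lambda$ but $\|x-c\|<\rho+\lambda$, and I would exhibit the explicit point
$$p=c+\lambda\,\frac{x-c}{\|x-c\|}$$
on the segment from $c$ to $x$, check that $\|p-c\|=\lambda$ so $p\in B_\lambda[c]$, and that $\|p-x\|=\|x-c\|-\lambda<\rho$, so $p\in B_\rho(x)$, contradicting disjointness.

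Finally, combining this lower bound with the triangle inequality gives
$$\rho+\lambda\leq \|x-c\|\leq \|x-a\|+\|a-c\|=\rho+\lambda,$$
so equality holds throughout and $a$ lies on the segment from $c$ to $x$. Therefore $(a-c)/\|a-c\|=(x-c)/\|x-c\|$, and since $a-c=\lambda\eta(a)$ this common unit direction is exactly $\eta(a)$. Writing $x=c+(\rho+\lambda)\eta(a)=a+\rho\eta(a)$ concludes the proof.

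I do not expect any serious obstacle. The only delicate point is handling correctly the mixture of the closed ball $B_\lambda[c]$ with the open ball $B_\rho(x)$ in the disjointness argument (in particular, the construction of the witness point $p$ requires $\lambda\leq\|x-c\|$, which is why the case split in the previous paragraph is needed); everything else is triangle inequality and a collinearity reading.
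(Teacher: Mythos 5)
Your proposal is correct and is essentially the paper's argument run in reverse: your witness point $p=c+\lambda\,(x-c)/\|x-c\|$ is exactly the paper's point $z\in\partial B_\lambda[a-\lambda\eta(a)]\cap[x,a-\lambda\eta(a)]$, and both proofs hinge on the same computation $\|p-x\|=\|x-c\|-\lambda<\rho$ contradicting $\rho=d(x,A)$; you merely package it as the equality case of the triangle inequality rather than as a contradiction with $x\neq a+\rho\eta(a)$. As a minor bonus, your explicit case $\|x-c\|<\lambda$ (where $x\in B_\lambda(c)\subset A$ contradicts $x\notin A$) covers a degenerate situation the paper passes over silently.
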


\begin{proof}To see this suppose the contrary, that is, let us suppose that exists $x$ verifying the required conditions with $x\neq a+\rho\eta(a)$.
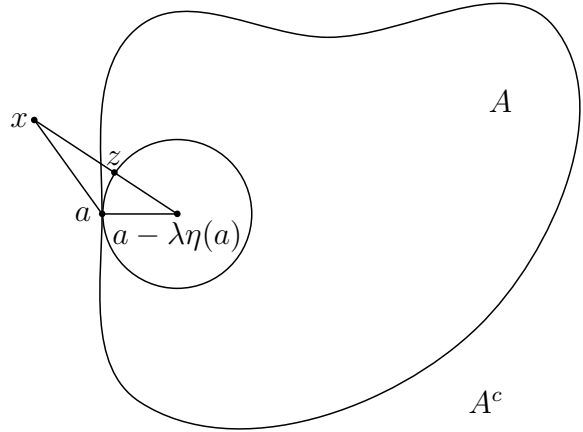
\begin{figure}[h!]\centering
\begin{pspicture}(-2,-.6)(10,5.5)
\psccurve[showpoints=false,fillstyle=solid,fillcolor=white,linecolor=black,linewidth=0.2mm,linearc=3](1.5,0)(1,2.5)(1.5,5)(4,4.85)(7,5)(6,1)
\psline[linearc=0.25,linecolor=black,linewidth=0.2mm](.1,3.75)(1,2.5)
\psdots*[dotsize=2.5pt](.1,3.75)
\rput(-.1,3.75){$\tiny{x}$}
\pscircle[fillstyle=solid,fillcolor=white,linecolor=black,linewidth=0.2mm,linearc=3](2,2.5){1}
\psdots*[dotsize=2.5pt](1,2.5)
\rput(.75,2.5){$\tiny{a}$}
\psdots*[dotsize=2.5pt](2,2.5)
\rput(2,2.2){$\tiny{a-\lambda\eta(a)}$}
\psline[linearc=0.25,linecolor=black,linewidth=0.2mm](1,2.5)(2,2.5)
\psline[linearc=0.25,linecolor=black,linewidth=0.2mm](.1,3.75)(2,2.5)
\psdots*[dotsize=2.5pt](1.165,3.051)
\rput(1.155,3.25){$\tiny{z}$}
\rput(6.3,4){$A$}
\rput(6.1,0){$A^c$}
\end{pspicture}\caption{Elements of Lemma \ref{A01}.}\label{Figuralema}
\end{figure}
$ $\\Then, $x$, $a$ and $a-\lambda\eta(a)$ can not
lie on the same line and hence,
\begin{equation}\label{ecu1}
\|a-\lambda\eta(a)-x\|<\|a-\lambda\eta(a)-a\|+\|a-x\|=\lambda+\rho.\vspace{1.4mm}
\end{equation}Let $z\in \partial B_\lambda[a-\lambda\eta(a)]\cap [x,a-\lambda\eta(a)]$, where $[x,a-\lambda\eta(a)]$ denotes the line segment
with endpoints $x$ and $a-\lambda\eta(a)$ (see Figure \ref{Figuralema}). Then,
$$\|a-\lambda\eta(a)-x\|\leq\|a-\lambda\eta(a)-z\|+\|z-x\|=\lambda+\|z-x\|. $$According to (\ref{ecu1}), $\|z-x\|=\|a-\lambda\eta(a)-x\|-\lambda<\lambda-\rho-\lambda=\rho$, which is a contradiction since $z\in A$ and $\rho=d(x,A)$.\end{proof}
Let us prove that $S=C_r(S)$. Since $S\subset C_r(S)$ for any $r>0$, it is enough to check if $ C_r(S)\subset S$. Equivalently, it will be checked that for all $  x\in S^c$ there exists an open ball of radius $r$ containing $x$. This ball will not intersect $S$. Let us fix $x \notin S$. If $d(x,S)\geq r$ then
$$x\in B_r(x) \mbox{ and } B_r(x)\cap S=\emptyset. $$
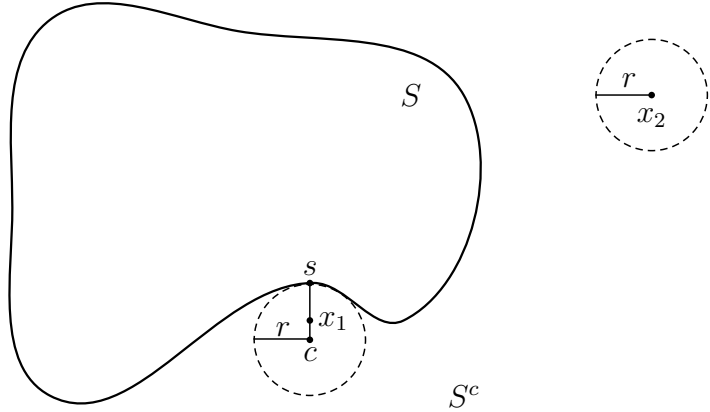
\begin{figure}[h!]
\vspace{-2cm}\centering
\begin{pspicture}(1,-0.5)(10,6.5)
\psccurve[showpoints=false,fillstyle=solid,fillcolor=white,linecolor=black,linewidth=0.3mm,linearc=3](1.5,0)(1,2.5)(1.5,5)(4,4.85)(7,4)(6.2,1)(5,1.5)
 \pscircle[linearc=0.25,linecolor=black,linewidth=0.2mm,linestyle=dashed,dash=3pt 2pt](4.955,.743){0.75}
\psdots*[dotsize=2.5pt](4.955,1.5)
\psdots*[dotsize=2.5pt](4.955, .743)
\psdots*[dotsize=2.5pt](4.955, 1)
\psline[linearc=0.25,linecolor=black,linewidth=0.2mm](4.955,.743)(4.955,1.493)
\psline[linearc=0.25,linestyle=solid,linecolor=black,linewidth=0.2mm](4.955,.7543)(4.205,.7543)
\rput(4.6,.89){$\tiny{r}$}
\rput(5.265,.99){$\tiny{x_1}$}
\rput(4.96,.55){$\tiny{c}$}
\rput(4.96,1.7){$\tiny{s}$}
\rput(6.3,4){$S$}
\rput(7,0){$S^c$}
\pscircle[linearc=0.25,linecolor=black,linewidth=0.2mm,linestyle=dashed,dash=3pt 2pt](9.5,4){0.75}
\psdots*[dotsize=2.5pt](9.5,4)
\rput(9.5,3.7){$\tiny{x_2}$}
\psline[linearc=0.25,linecolor=black,linewidth=0.2mm](9.5,4)(8.75,4)
\rput(9.2,4.2){$\tiny{r}$}
\end{pspicture}\caption{Elements of Proposition \ref{rconvexo} with $d(x_1,S)<r$ and $d(x_2,S)>r$.}\label{ooo}
\end{figure}
$ $\\Otherwise, if $d(x,S)< r$, let $s$ be a projection of $x$ on $S$ and let us define $\rho=d(x,S)=\|x-s\|$. According to Lemmas \ref{a} and \ref{A01},
$$B_{\lambda}[s-\lambda\eta(s)]\subset S, $$where $\eta(s)=(s-x)/\|s-x\|$ and $x=s+\rho\eta(s)$.
In addition, $s\in \partial S $ and, according to the imposed conditions, $S$ fulfills the $r-$rolling property. So,
$$\exists c\in \mathbb{R}^{d} \mbox{ such that }s\in B_r[c] \mbox{ and }B_r(c)\cap S=\emptyset. $$According to Lemma \ref{A01},$$c=s+r\eta(s). $$since $s$ is projection of $c$ on $S$. We are supposing that $\rho<r$. So,
$$\|x-c\|=\|(\rho-r)\eta(s)\|=r-\rho<r .$$Then, $x \notin C_r(S)$ since that $x\in B_r(c) $ and $B_r(c)\cap S=\emptyset$.\\Figure \ref{ooo} shows the elements used in the proof of Proposition \ref{rconvexo}.
 \hfill $\Box$\vspace{.15cm}\\
\emph{Proof of Proposition \ref{maximo}} \vspace{.15cm}\\
It will be proved that $r_0 \in \{\gamma>0:C_\gamma(S)=S\}$. According to the properties of the supreme, $$r_0\in \overline{\{\gamma>0:C_\gamma(S)=S\}} $$and, so,
$$\exists \{r_n\}\subset \{\gamma>0:C_\gamma(S)=S\} \mbox{ such that }\lim_{n\rightarrow \infty} r_n=r_0. $$Without loss of generality, is possible to assume that $\{r_n\}$ is increasing sequence. Then,
$$C_{r_n}(S)=S, \mbox{ } \forall n\in \mathbb{N}. $$Cuevas et al. (2012) proved that $S$ fulfills the $r_n-$rolling property for all $n$. Then, $S$ fulfills the $r_0-$rolling property, see Proposition \ref{nonece}. Taking into account the imposed restrictions, it is verified that $S^c$ satisfies the $\lambda-$rolling condition. So, it is possible to guarantee that $S$ is under $(R_\lambda^{r_0})$. According to Proposition \ref{rconvexo}, $S$ is $r_0-$convex set. Using Proposition 2 in Cuevas et al. (2012), it is possible to guarantee that $S$ fulfills the $r_0-$rolling property.\hfill $\Box$\vspace{.15cm}\\
\emph{Proof of Theorem \ref{consistencia}}\vspace{.15cm}\\
Some auxiliary results are necessary. First we will prove that, with probability increasing to one, $\hat{r}_0$ is at least as big as $r_0$.
\begin{proposition}\label{alpha}
Let $S\subset \mathbb{R}^{d}$ be a compact, nonconvex and nonempty set
verifying ($R_{\lambda}^r$) and $\mathcal{X}_n$ a uniform and i.i.d sample on $S$. Let $r_0$ be the parameter defined in (\ref{maximo2}) and $\{\alpha_n\}\subset (0,1)$ a sequence converging to zero. Then,
$$\lim_{n\to\infty}\mathbb{P}(\hat{r}_0\geq r_0)=1.$$\end{proposition}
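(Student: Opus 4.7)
The plan is to show that the uniformity test is not rejected at the single value $\gamma = r_0$ with probability tending to one. Since then $r_0$ belongs to the set whose supremum defines $\hat r_0$, we immediately obtain $\hat r_0\ge r_0$ with probability tending to one.

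First I would exploit Proposition \ref{maximo}: $S$ is itself $r_0$-convex, so $S=C_{r_0}(S)$ and, with probability one,
$$C_{r_0}(\mathcal{X}_n)\subset C_{r_0}(S)=S.$$
Any closed ball contained in $C_{r_0}(\mathcal{X}_n)\setminus\mathcal{X}_n$ is then contained in $S\setminus\mathcal{X}_n$, so the empirical maximal spacing is dominated by the true maximal spacing and, in particular, $\hat V_{n,r_0}\le V_n(S)$. Next, I would invoke Janson's (1987) theorem in the form
$$\frac{n\,V_n(S)}{a}-L_n\xrightarrow{d}U,\qquad L_n:=\log n+(d-1)\log\log n+\log\beta,\quad a:=\mu(S),$$
with $U$ distributed as in (\ref{U}); in particular $W_n:=nV_n(S)/a-L_n=O_P(1)$. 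Finally, I would use the fact that $C_{r_0}(\mathcal{X}_n)$ is a consistent-in-measure estimator of $S$ under ($R_\lambda^r$), which is well established (see, e.g., Rodr\'{\i}guez-Casal, 2007, via the rolling property guaranteed by Proposition \ref{maximo}), to conclude that $a_n:=\mu(C_{r_0}(\mathcal{X}_n))\to a$ in probability, with a rate of order $(\log n/n)^{2/(d+1)}$ that makes $(a-a_n)L_n=o_P(1)$.

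Combining these ingredients, non-rejection of $H_0$ on $C_{r_0}(\mathcal{X}_n)$ is the event $\hat V_{n,r_0}\le a_n(u_{\alpha_n}+L_n)/n$. Using $\hat V_{n,r_0}\le V_n(S)=a(L_n+W_n)/n$, it is implied by
$$(a-a_n)L_n + a W_n \le a_n u_{\alpha_n}.$$
The assumption $\alpha_n\to 0$, together with the Gumbel-type distribution (\ref{U}), forces $u_{\alpha_n}\to\infty$; since $a_n\to a>0$, $W_n=O_P(1)$ and $(a-a_n)L_n=o_P(1)$, the right-hand side diverges while the left-hand side stays $o_P(u_{\alpha_n})$, so this event has probability tending to one, proving the claim.

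The main delicate point is the replacement of the unknown normalising constant $a=\mu(S)$ by the random plug-in $a_n=\mu(C_{r_0}(\mathcal{X}_n))$ in the critical value: the bias $(a-a_n)L_n$ must be shown to be negligible relative to the diverging quantile $u_{\alpha_n}$. The hypothesis $\log(\alpha_n)/n\to 0$ gives the necessary (mild) lower bound on the speed at which $u_{\alpha_n}$ grows, while the consistency rate of the $r$-convex hull absorbs the logarithmic penalty coming from $L_n$. Note that the assumption on $\{\alpha_n\}$ is used here only qualitatively through $u_{\alpha_n}\to\infty$; the stronger bound $\log(\alpha_n)/n\to 0$ will become relevant in the companion upper bound $\mathbb{P}(\hat r_0\le r_0+\varepsilon)\to 1$ needed to complete the consistency in Theorem \ref{consistencia}.
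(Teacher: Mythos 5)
Your proposal is correct and follows essentially the same route as the paper: the inclusion $C_{r_0}(\mathcal{X}_n)\subset S$ giving $\hat{V}_{n,r_0}\le V_n(S)$, Janson's theorem for $V_n(S)$, and the rate $\mu(C_{r_0}(\mathcal{X}_n))/\mu(S)=1+O_P\left((\log n/n)^{2/(d+1)}\right)$ to make the plug-in bias negligible. The only difference is the final step: the paper concludes via Slutsky's lemma and uniform convergence of distribution functions that $\mathbb{P}\left((u_{\alpha_n}/A_n)U_n\le u_{\alpha_n}\right)\to 1$, whereas you simply note that the left-hand side is $O_P(1)$ while $a_n u_{\alpha_n}$ diverges, an equivalent and slightly more elementary finish.
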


\begin{proof}
From the definition of $\hat{r}_0$, see (\ref{r0estimador}), it is clear that
$$\mathbb{P}(\hat{r}_0\geq r_0)\geq \mathbb{P}(\hat{V}_{n,r_0}\leq\hat{c}_{n,\alpha_n,r_0}),$$
where, remember, $\hat{V}_{n,r_0}$ denotes the volume of the maximal spacing in $C_{r_0}(\mathcal{X}_n)$,
$\hat{c}_{n,\alpha_n,r_0}= \mu(C_{r_0}(\mathcal{X}_n))( u_{\alpha_n}+\log{n}+(d-1)\log\log{n}+\log{\beta})\cdot n^{-1}$ and
$u_{\alpha_n}$ satisfies $\mathbb{P}(U\leq u_{\alpha_n})=1-\alpha_n$ and
$U$ is the random variable defined in (\ref{U}).
Since, with probability one, $C_{r_0}(\mathcal{X}_n)\subset S$, we have $\hat{V}_{n,r_0}\leq V_{n}(S)$ where $V_n(S)$ denotes de volume of a ball with radius the maximal spacing
of $S$. Hence,
$$\mathbb{P}(\hat{r}_0\geq r_0) \geq \mathbb{P}(V_{n}(S)\leq \hat{c}_{n,\alpha_n,r_0})=\mathbb{P}\left(\frac{u_{\alpha_n} }{A_n}U_n\leq u_{\alpha_n}\right),$$
where
$$U_n=\frac{nV_{n}(S)}{\mu(S)}-\log{n}-(d-1)\log\log{n}-\log{\beta}$$
and
$$A_n=\frac{n\hat{c}_{n,\alpha_n,r_0}}{\mu(S)}-\log{n}-(d-1)\log\log{n}-\log{\beta}.$$
According to the Janson (1987)'s Theorem, $U_n \stackrel{d}{\rightarrow} U $. In addition, it can be easily proved easily
that $u_{\alpha_n}/A_n\stackrel{P}{\rightarrow} 1$. This can be done by taking into account that
$$\mu(C_{r_0}(\mathcal{X}_n))/\mu(S)=1+O_{P}((\log (n)/n)^{2/(d+1)}),$$
see Theorem 3 in
Rodr\'iguez-Casal (2007). Now, according
to the Slutsky's Lemma, $(u_{\alpha_n}/ A_n)U_n \stackrel{d}{\rightarrow}U $. Notice that $U$ has a continuous distribution, so convergence in distribution implies that
$$
\sup_{u}|\mathbb{P}\left(\left(u_{\alpha_n}/ A_n)U_n\leq u\right)-\mathbb{P}(U\leq u)\right|\to 0.
$$
Since $\mathbb{P}(U\leq u_{\alpha_n})=1-\alpha_n$ and $\alpha_n\to 0$, this ensures
that $$\mathbb{P}\left((u_{\alpha_n} /A_n)U_n\leq u_{\alpha_n}\right)\to 1.$$Therefore, $\mathbb{P}(\hat{r}_0\geq r_0)\to 1. $\end{proof}

It remains to prove that $\hat{r}_0$ cannot be arbitrarily larger that $r_0$. The following lemma ensures that, for a given $\gamma>r_0$,
there exists an open ball contained in $C_{\gamma}(S)$ which does not meet $S$.


\begin{lemma}\label{puntointerior}Let $S\subset \mathbb{R}^{d}$ be a compact, nonconvex and nonempty set verifying ($R_{\lambda}^r$) and let be $\gamma>0$ such
that $S\varsubsetneq C_\gamma(S)$. Then, there exists $\epsilon>0$ and $x\in C_\gamma(S)$ such that $B_\epsilon(x)\subset C_\gamma(S)$ and $B_\epsilon(x)\cap S=\emptyset$.

\end{lemma}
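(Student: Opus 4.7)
The plan is to reduce the lemma to exhibiting a point $x\in\mathrm{Int}(C_\gamma(S))\setminus S$; then any $\epsilon$ smaller than both $d(x,S)$ and $d(x,\partial C_\gamma(S))$ yields the required ball, using only that $S$ is closed.

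First I would localize the hole. Since $S\subsetneq C_\gamma(S)$, pick $y_0\in C_\gamma(S)\setminus S$; Proposition~\ref{maximo} combined with $y_0\notin S=C_{r_0}(S)$ yields an open $r_0$-ball $B_{r_0}(c_0)$ with $y_0\in B_{r_0}(c_0)$ and $B_{r_0}(c_0)\cap S=\emptyset$. In particular $r_0\le d(c_0,S)<\gamma$: the upper bound holds because otherwise $B_\gamma(c_0)$ would be a $\gamma$-ball disjoint from $S$ containing $y_0$, contradicting $y_0\in C_\gamma(S)$.

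The main obstacle is to exhibit the interior point. My candidate is the point $y^*\in C_\gamma(S)$ that maximizes the continuous map $y\mapsto d(y,S)$ on the compact set $C_\gamma(S)$; it satisfies $0<\rho^*:=d(y^*,S)<\gamma$. I would argue $y^*\in\mathrm{Int}(C_\gamma(S))$ by contradiction. If $y^*\in\partial C_\gamma(S)$, the outer $\gamma$-rolling of the $\gamma$-convex set $C_\gamma(S)$ supplies $z$ with $B_\gamma(z)\cap C_\gamma(S)=\emptyset$ and $y^*\in\partial B_\gamma[z]$; a short perturbation (moving $z$ slightly toward $y^*$, using $y^*\in C_\gamma(S)$) forces $d(z,S)=\gamma$, so $\partial B_\gamma[z]\cap \partial S$ contains some $s^{**}$. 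Tangent perturbations of $y^*$ on the surface $\partial B_\gamma[z]$, combined with the first-order optimality of $d(\cdot,S)$ at $y^*$, identify the outward normal $\eta^*=(y^*-s^*)/\rho^*$ at the projection $s^*\in\partial S$ of $y^*$ with $(z-y^*)/\gamma$, giving $z=y^*+\gamma\eta^*$.

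The technical heart of the argument is then to combine the inner $\lambda$-rolling of $S$ at $s^{**}$ (from $(R_\lambda^r)$) with the spherical-geometry constraint $\|y^*-s^{**}\|\ge\rho^*$ on the sphere $\partial B_\gamma[z]$ to extract a point of $S$ strictly closer to $y^*$ than $\rho^*$, contradicting the minimality of the projection at $s^*$. This final step is the real content of the lemma: it uses both rolling conditions from $(R_\lambda^r)$ in an essential way, and the strict bound $\rho^*<\gamma$ must enter crucially, so that the inner $\lambda$-ball at $s^{**}$ pushes a point of $S$ past the projection bound $\rho^*$.
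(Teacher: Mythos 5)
Your reduction is fine (an interior point of $C_\gamma(S)$ outside the closed set $S$ does yield the ball), but the way you propose to produce such a point has a genuine gap: the claim that the maximizer $y^*$ of $d(\cdot,S)$ over $C_\gamma(S)$ lies in $\mathrm{Int}(C_\gamma(S))$ is false in general, so the contradiction you sketch cannot be reached. Take $d=2$, $S=B_1[(-2,0)]\cup B_1[(2,0)]$ (compact, nonconvex, satisfying $(R_1^1)$, with $r_0=1$) and $\gamma=2$, so $S\subsetneq C_\gamma(S)$. The admissible empty $2$-balls have centers at distance $\geq 3$ from both $(\pm 2,0)$, and the deepest such ball in the waist is $B_2\bigl((0,\sqrt{5})\bigr)$ (and its mirror image). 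One checks that $y^*=(0,\sqrt{5}-2)$ belongs to $C_2(S)$, lies on $\partial C_2(S)$, and maximizes $d(\cdot,S)$ over $C_2(S)$: moving upward is blocked by the empty ball $B_2((0,\sqrt{5}))$, moving along its boundary strictly decreases the distance to the nearer component of $S$, and every interior point of the waist (e.g.\ the origin, which does witness the lemma) is strictly closer to $S$. So the boundary case you want to rule out genuinely occurs. Moreover, the step you use to analyze it is not legitimate: tangent perturbations of $y^*$ along $\partial B_\gamma[z]$ need not stay in $C_\gamma(S)$, so maximality of $d(\cdot,S)$ on $C_\gamma(S)$ gives no first-order information along that sphere; in the example above the claimed identity $z=y^*+\gamma\eta^*$ simply fails ($z=(0,\sqrt{5})$, while $y^*+\gamma\eta^*\approx(-1.99,0.47)$, the projection $s^*$ not even being unique). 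Finally, the step you yourself call the ``technical heart'' (extracting from the inner $\lambda$-ball at $s^{**}$ a point of $S$ closer to $y^*$ than $\rho^*$) is only announced, never carried out, so even where your outline is sound it is a plan rather than a proof.

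For comparison, the paper avoids any extremal point and argues by a dichotomy on $\partial S$. If some $s\in\partial S$ lies in $\mathrm{Int}(C_\gamma(S))$, then $B_\rho(s)\subset C_\gamma(S)$ for some $\rho>0$, and since $S$ is $r_0$-convex it satisfies the outside $r_0$-rolling condition (Cuevas et al.), so there is $B_{r_0}(y)$ with $s\in B_{r_0}[y]$ and $B_{r_0}(y)\cap S=\emptyset$; any small ball inside $B_{r_0}(y)\cap B_\rho(s)$ is the required $B_\epsilon(x)$. Otherwise $\partial S\subset\partial C_\gamma(S)$, and then the $\gamma$-rolling property of the $\gamma$-convex set $C_\gamma(S)$ is inherited by $S$; combined with the $\lambda$-rolling of $S^c$, Proposition~\ref{rconvexo} forces $S$ to be $\gamma$-convex, contradicting $S\subsetneq C_\gamma(S)$. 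If you want to salvage your write-up, redirecting it along this dichotomy is the natural fix.
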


\begin{proof}
Let us assume, for a moment, that we can find $s\in \partial S$ such that $s\in \interior(C_\gamma(S))$. In this case, there exists $\rho>0$ satisfying that $B_{\rho}(s)\subset C_{\gamma}(S)$. On the other
hand, by assumption, $S$ is $r_0-$convex which implies, by Proposition 2 in Cuevas et al. (2012), that $S$ fulfills the $r_0-$rolling condition. This ensures that
there exists a ball $B_{r_0}(y)$ such that $s\in B_{r_0}[y]$ and $B_{r_0}(y)\cap S=\emptyset$. It is clear that we can find an open ball $B_{\epsilon}(x)$ such
that $B_{\epsilon}(x)\subset B_{r_0}(y)\cap B_{\rho}(s)$.
By construction $B_{\epsilon}(x)\subset B_{r_0}(y)$ and, hence, $B_{\epsilon}(x)\cap S=\emptyset$. Finally, $B_{\epsilon}(x)\subset B_{\rho}(s)$ and, therefore, $B_{\epsilon}(x)\subset C_{\gamma}(S)$. This
would finished the proof in this case.

\begin{figure*}
\hspace{.93cm}\begin{pspicture}(-1.3,-0.5)(10,5.9)
\psccurve[showpoints=false,fillstyle=solid,fillcolor=white,linecolor=gray,linewidth=.75mm,linearc=3](1.5,0)(1,2.5)(1.5,5)(4,4.85)(7,4)(6.2,-0.05)(5,1.5)
\psccurve[showpoints=false,fillstyle=solid,fillcolor=white,linecolor=white,linewidth=.85mm,linearc=3](1.5,0)(6.2,-0.05)(5,1.5)
\psarc[showpoints=false,fillstyle=solid,fillcolor=white,linecolor=gray,linewidth=0.45mm,linearc=3](6.108,.7351) {.8}{264}{280}
\psarc[showpoints=false,fillstyle=solid,fillcolor=red,linecolor=gray,linewidth=0.45mm,linearc=3](1.87,0.805) {.9}{240}{281}
\psarc[showpoints=false,fillstyle=solid,fillcolor=white,linecolor=gray,linewidth=0.45mm,linearc=3](4.1,-9.867) {10}{78.9}{102}
\psccurve[showpoints=false,fillstyle=solid,fillcolor=white,linecolor=black,linewidth=0.3mm,linearc=3](1.5,0)(1,2.5)(1.5,5)(4,4.85)(7,4)(6.2,-0.05)(5,1.5)
\psdots*[dotsize=1.75pt](4.8,1.53)
\pscircle[linearc=0.25,linecolor=black,linewidth=0.2mm,linestyle=dashed,dash=3pt 3pt](4.8,1.53){1.}
 \rput(4.8,1.685){\small{$s$}}
 \rput(1.7,4){$S$}
\rput(6.5,5){$C_\gamma(S)$}
\rput(4.8,2.83){\small{$B_{\rho}(s)$}}
\rput(3.72,0.49){\small{$B_{r_0}[y]$}}
 \pscircle[linearc=0.25,linecolor=black,linewidth=0.2mm,linestyle=solid,dash=3pt 2pt](4.79,0.935){.6}
 \pscircle[linestyle=dashed,dash=3pt 3pt,linecolor=gray,linewidth=0.2mm,linearc=0.25,fillstyle=crosshatch*,fillcolor=gray,hatchcolor=white,hatchwidth=1.2pt,hatchsep=.5pt,hatchangle=0](4.795,1.05){.4}
\psdots*[dotsize=1.75pt](4.795,1.05)
\rput(4.795,.85){\small{$x$}}
\end{pspicture}\caption{Elements of proof in Lemma \ref{puntointerior}. $\partial S$ in black, $\partial C_\gamma(S)$ in gray, $B_{\rho}(s)$, $B_{r_0}[y]$ and $B_\epsilon(x)$ in gray.}\label{oossso}
\end{figure*}
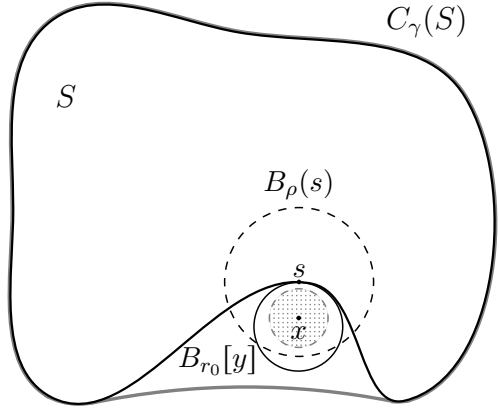

It only remains to show that $\partial S\subset \partial C_{\gamma}(S)$ leads to a contradiction. First, the hypothesis
$\partial S\subset \partial C_{\gamma}(S)$ imply that $S$ satisfy the $\gamma-$rolling condition. This is a straightforward consequence of Proposition 2 in Cuevas et al. (2012)
since $C_{\gamma}(S)$ is $\gamma$-convex. But the $\gamma-$rolling condition imply, under the $(R_{\lambda}^{r})$ shape
restriction, $\gamma$-convexity, see Proposition \ref{rconvexo}.  This is a contradiction since we are assuming that $S\varsubsetneq C_\gamma(S). $  \end{proof}

%
%

\normalsize
\begin{lemma}\label{lem_spacing_minimo}Let $S\subset \mathbb{R}^{d}$ be a compact, nonconvex and nonempty set
verifying ($R_{\lambda}^r$) and $\mathcal{X}_n$ a uniform and i.i.d sample on $S$. Let $r_0$ be the parameter defined in (\ref{maximo2}). Then, for all $r>r_0$, there exists an open ball  $B_\rho(x)$
such that
$B_{\rho}(x)\cap S=\emptyset$ and
 $$
\mathbb{P}\left(B_{\rho}(x)\subset C_{r}(\mathcal{X}_n),\mbox{ eventually}\right)=1
$$

\end{lemma}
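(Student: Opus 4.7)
The plan is to use Lemma \ref{puntointerior} to produce a ball inside $C_r(S)$ that is disjoint from $S$, shrink it slightly to a concentric ball $B_\rho(x)$ with $0<\rho<\epsilon$, and then show that almost surely this smaller ball lies inside $C_r(\mathcal{X}_n)$ for all $n$ large enough. The latter will follow from the Hausdorff density of the uniform sample in $S$ combined with a compactness argument that upgrades a strict pointwise inequality into a uniform one.

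Since $r>r_0$, the definition (\ref{maximo2}) (together with Proposition \ref{maximo}, which shows the supremum is attained at $r_0$) forces $S\varsubsetneq C_r(S)$. Lemma \ref{puntointerior} then supplies $\epsilon>0$ and $x\in C_r(S)$ with $B_\epsilon(x)\subset C_r(S)$ and $B_\epsilon(x)\cap S=\emptyset$. Fixing any $\rho\in(0,\epsilon)$, the relation $B_\rho(x)\cap S=\emptyset$ is immediate, so it remains to verify the eventual containment in $C_r(\mathcal{X}_n)$. Writing $C_r(S)=(S\oplus rB)\ominus rB$, the hypothesis $B_\epsilon(x)\subset C_r(S)$ is equivalent to $B_{r+\epsilon}(x)=B_\epsilon(x)\oplus rB\subset S\oplus rB$, i.e.\ $d(z,S)<r$ for every $z$ with $\|z-x\|<r+\epsilon$. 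Since $z\mapsto d(z,S)$ is continuous and the compact set $\overline{B_{r+\rho}(x)}$ is strictly contained in $B_{r+\epsilon}(x)$, the maximum of $d(\cdot,S)$ on $\overline{B_{r+\rho}(x)}$ equals $r-2\delta_0$ for some $\delta_0>0$.

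Next I would invoke the classical fact that, for an i.i.d.\ uniform sample on the compact set $S$, $d_H(\mathcal{X}_n,S)\to 0$ almost surely. On the almost sure event that eventually $d_H(\mathcal{X}_n,S)<\delta_0$, given any $z$ with $\|z-x\|\le r+\rho$, a projection $s_z\in S$ of $z$ onto $S$ satisfies $\|s_z-z\|\le r-2\delta_0$, and some $X_i\in\mathcal{X}_n$ satisfies $\|X_i-s_z\|<\delta_0$; the triangle inequality yields $\|X_i-z\|<r-\delta_0<r$, so $B_r(z)\cap\mathcal{X}_n\neq\emptyset$. By the defining intersection representation of the $r$-convex hull, this says exactly that $B_\rho(x)\subset C_r(\mathcal{X}_n)$ for all such $n$. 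I expect the only delicate step to be the uniform gap $\delta_0$: it is essential to choose $\rho<\epsilon$ so that $\overline{B_{r+\rho}(x)}$ sits strictly inside $B_{r+\epsilon}(x)$ and compactness can promote the pointwise inequality $d(z,S)<r$ to a uniform one; the rest is a direct combination of the Minkowski formulation of $C_r$ and the Hausdorff density of the sample.
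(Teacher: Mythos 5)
Your proposal is correct, but it follows a genuinely different route from the paper. The paper interposes an intermediate radius $r^{*}\in(r_0,r)$, applies Lemma \ref{puntointerior} at $r^{*}$, invokes Lemma 3 of Walther (1997) to get $\mathbb{P}(S\oplus r^{*}B\subset\mathcal{X}_n\oplus rB,\ \mbox{eventually})=1$, and then transfers the ball $B_\epsilon(x)\subset C_{r^{*}}(S)$ into $C_r(\mathcal{X}_n)$ by erosion algebra ($(A\ominus C)\ominus D=A\ominus(C\oplus D)$), ending with $\rho=\epsilon/2$; the trick $r\leq \epsilon/2+r^{*}$ plays the role of your margin. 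You instead work directly at radius $r$: you translate $B_\epsilon(x)\subset C_r(S)$ into the pointwise bound $d(z,S)<r$ on $B_{r+\epsilon}(x)$, use compactness of $\overline{B_{r+\rho}(x)}$ (with $\rho<\epsilon$) to extract a uniform gap $\delta_0$, and then replace Walther's lemma by the elementary almost-sure convergence $d_H(\mathcal{X}_n,S)\to 0$, finishing with the intersection definition of $C_r(\mathcal{X}_n)$ via the correct observation that any excluding ball meeting $B_\rho(x)$ has its center within $r+\rho$ of $x$. What each approach buys: the paper's proof is shorter because the quantitative covering statement is outsourced to Walther and the morphological identities do the bookkeeping; yours is more self-contained and elementary, needing only Borel--Cantelli-type density of the sample and making the geometry explicit. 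One point you should make explicit: $d_H(\mathcal{X}_n,S)\to 0$ a.s.\ is not automatic for an arbitrary compact $S$ (it requires $S$ to coincide with the topological support of the uniform law, which fails, e.g., if $S$ has isolated points); here it does hold because the inner rolling condition in ($R_{\lambda}^r$) (the $\lambda$-rolling of $S^c$) guarantees every point of $S$, including boundary points, has neighborhoods of positive measure inside $S$. With that one sentence added, your argument is complete.
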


\begin{proof}\normalsize
Let be $r^{*}$ such that $r>r^*>r_0$. Since $C_{r_0}(S)=S \subsetneq C_{r^*}(S)$, according to Lemma \ref{puntointerior},
$$\exists B_\epsilon(x) \mbox{ such that }B_\epsilon(x)\subset C_{r^*}(S)\mbox{ and }B_\epsilon(x)\cap S=\emptyset. $$
It can be assumed, without loss
of generality, that $r\leq\frac{\epsilon}{2}+r^*$. If this is not the case then it would be possible to replace $r^*$ by $r^{**}>r^*$ satisfying  $r^{**}<r\leq\frac{\epsilon}{2}+r^{**}$. For this $r^{**}$,
$$B_\epsilon(x)\subset C_{r^*}(S)\subset  C_{r^{**}}(S) \mbox{ and }B_\epsilon(x)\cap S=\emptyset. $$
Now, we can apply Lemma 3 in Walther (1997) in order to ensure that
$$
\mathbb{P}\left(S\oplus r^* B\subset \mathcal{X}_n\oplus rB,\mbox{ eventually}\right)=1.
$$
If $S\oplus r^* B\subset \mathcal{X}_n\oplus rB$ then $(S\oplus r^* B)\ominus r^*B\subset (\mathcal{X}_n\oplus rB)\ominus r^*B$, that is, $C_{r^*}(S)\subset  (\mathcal{X}_n\oplus rB)\ominus r^*B$.
This imply that
$$C_{r^*}(S)\ominus (r-r^*)B\subset ((\mathcal{X}_n\oplus rB)\ominus r^*B)\ominus (r-r^*)B.$$In addition,
$$ ((\mathcal{X}_n\oplus rB)\ominus r^*B)\ominus (r-r^*)B=(\mathcal{X}_n\oplus rB)\ominus rB=C_r(\mathcal{X}_n),$$
where we have used that, for sets $A,C$ and $D$, $(A\ominus C)\ominus D=A\ominus (C\oplus D)$. Finally, since $B_{\epsilon}(x)\subset C_{r^*}(S)$ and
$\epsilon/2\geq (r-r^*)$, we have
$B_{\epsilon/2}(x)\subset C_{r^*}(S)\ominus (\epsilon/2 B)\subset C_{r^*}(S)\ominus (r-r^*)B\subset C_{r}(\mathcal{X}_n)$. This concludes the proof of the lemma by taking $\rho=\epsilon/2$.  \end{proof}

\normalsize
\begin{proposition}\label{prop_nosobreestimamos}
Let $S\subset \mathbb{R}^{d}$ be a compact, nonconvex and nonempty set
verifying ($R_{\lambda}^r$) and $\mathcal{X}_n$ a uniform and i.i.d sample on $S$. Let $r_0$ be the parameter defined in (\ref{maximo2}) and $\{\alpha_n\}\subset (0,1)$ a sequence converging to zero
such that $\log(\alpha_n)/n\to 0$. Then, for any $\epsilon>0$,
$$
\mathbb{P}\left(\hat{r}_0\leq r_0+\epsilon,\mbox{ eventually}\right)=1
$$
\end{proposition}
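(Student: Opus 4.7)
The plan is: since $\hat{r}_0=\sup\{\gamma>0:H_0\text{ is accepted on }C_\gamma(\mathcal{X}_n)\}$, the inequality $\hat{r}_0\leq r_0+\epsilon$ will follow once we show that, with probability tending to one, $H_0$ is rejected on $C_\gamma(\mathcal{X}_n)$ for \emph{every} $\gamma\geq r_0+\epsilon$ simultaneously. To achieve this, I would exhibit a \emph{deterministic} open ball $B_\rho(x)$ that, eventually almost surely, lies inside $C_\gamma(\mathcal{X}_n)\setminus\mathcal{X}_n$ for all such $\gamma$, and then check that its constant volume overwhelms the test's critical value as $n\to\infty$.

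The ball comes directly from Lemma \ref{lem_spacing_minimo} applied to $r=r_0+\epsilon$: it produces $B_\rho(x)$ with $B_\rho(x)\cap S=\emptyset$ and $B_\rho(x)\subset C_{r_0+\epsilon}(\mathcal{X}_n)$ eventually, with probability one. The $r$-convex hull is monotone increasing in $r$ (any ball $B_{\gamma_2}(y)$ disjoint from a set contains, around each of its interior points, a ball of smaller radius $\gamma_1$ also disjoint from that set), so the inclusion propagates to $B_\rho(x)\subset C_\gamma(\mathcal{X}_n)$ for every $\gamma\geq r_0+\epsilon$. Since $\mathcal{X}_n\subset S$ while $B_\rho(x)\cap S=\emptyset$, the ball is automatically disjoint from the sample, and therefore the maximal spacing statistic satisfies $\hat{V}_{n,\gamma}\geq\omega_d\rho^d$ uniformly in $\gamma\geq r_0+\epsilon$, where $\omega_d$ denotes the volume of the unit ball in $\mathbb{R}^d$.

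It remains to bound the critical value $\hat{c}_{n,\alpha_n,\gamma}$ uniformly in $\gamma$. Almost surely $C_\gamma(\mathcal{X}_n)\subset H(\mathcal{X}_n)\subset H(S)$, so $\mu(C_\gamma(\mathcal{X}_n))\leq\mu(H(S))<\infty$. From $\mathbb{P}(U\leq u)=\exp(-\exp(-u))$ one reads $u_{\alpha_n}=-\log(-\log(1-\alpha_n))\sim-\log\alpha_n$, and the hypothesis $\log(\alpha_n)/n\to 0$ therefore yields $u_{\alpha_n}/n\to 0$. Combined with $\log n/n\to 0$, this gives
\[
\sup_{\gamma\geq r_0+\epsilon}\hat{c}_{n,\alpha_n,\gamma}\leq\frac{\mu(H(S))\bigl(u_{\alpha_n}+\log n+(d-1)\log\log n+\log\beta\bigr)}{n}\longrightarrow 0.
\]
Hence, on the full-probability event provided by Lemma \ref{lem_spacing_minimo}, for $n$ large enough one has $\hat{V}_{n,\gamma}>\hat{c}_{n,\alpha_n,\gamma}$ for every $\gamma\geq r_0+\epsilon$, which forces $\hat{r}_0\leq r_0+\epsilon$ and proves the claim.

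The main obstacle I anticipate is precisely this uniformity in $\gamma$: a pointwise rejection at a single $\gamma>r_0+\epsilon$ would not prevent the supremum defining $\hat{r}_0$ from accumulating just above $r_0+\epsilon$. Two ingredients rescue uniformity and should be stressed in the write-up: the monotonicity of $C_\gamma$ in $\gamma$, so that the \emph{same} witnessing spacing $B_\rho(x)$ works for every larger $\gamma$, and the deterministic a priori bound $\mu(C_\gamma(\mathcal{X}_n))\leq\mu(H(S))$, which supplies a $\gamma$-free upper bound on the critical value that vanishes with $n$ under the assumed rate on $\alpha_n$. Everything else is routine bookkeeping.
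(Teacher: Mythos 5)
Your proposal is correct and follows essentially the same route as the paper's proof: invoke Lemma \ref{lem_spacing_minimo} at $r=r_0+\epsilon$ to get a fixed ball $B_\rho(x)$ disjoint from $S$ whose constant volume lower-bounds $\hat{V}_{n,\gamma}$ for all $\gamma\geq r_0+\epsilon$ (via monotonicity of $C_\gamma$), and dominate $\sup_\gamma\hat{c}_{n,\alpha_n,\gamma}$ by $\mu(H(S))(u_{\alpha_n}+\log n+(d-1)\log\log n+\log\beta)/n\to 0$ using $u_{\alpha_n}\sim-\log\alpha_n$ and $\log(\alpha_n)/n\to 0$. Your explicit treatment of the uniformity in $\gamma$ simply spells out what the paper states as $\hat{V}_{n,r'}\geq\hat{V}_{n,r}\geq c_\rho$ for $r'\geq r$.
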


\begin{proof}\normalsize
Given $\epsilon>0$ let be $r=r_0+\epsilon$. According to Lemma \ref{lem_spacing_minimo}, there exists $x\in\mathbb{R}^d$ and $\rho>0$ such that $B_{\rho}(x)\cap S=\emptyset$
and
$$
\mathbb{P}\left(B_{\rho}(x)\subset C_{r}(\mathcal{X}_n),\mbox{ eventually}\right)=1.
$$
Since, with probability one, $\mathcal{X}_n\subset S$ we have $B_{\rho}(x)\cap \mathcal{X}_n=\emptyset$. Hence, if $B_{\rho}(x)\subset C_{r}(\mathcal{X}_n)$, we
have $\hat{V}_{n,r}\geq \mu(B_{\rho}(x)) =c_{\rho}>0$. Similarly, $\hat{V}_{n,r^\prime}\geq \hat{V}_{n,r}\geq c_{\rho}$ for all $r^{\prime}\geq r$. On the other hand, since
$-u_{\alpha_n}/\log(\alpha_n)=\log(-\log(1-\alpha_n))/\log(\alpha_n)\to 1$, we have, with probability one,
\begin{eqnarray*}
  \sup_{r^{\prime}}\hat{c}_{n,\alpha_n,r^{\prime}} &\leq& \mu(H(S)) (u_{\alpha_n}+\log{n}+(d-1)\log\log{n}+ \\
   &+&\log{\beta})\cdot n^{-1}
\end{eqnarray*}\normalsize{and$$
\mu(H(S))( u_{\alpha_n}+\log{n}+(d-1)\log\log{n}+\log{\beta})\cdot n^{-1}\to 0
$$where $H(S)$ denotes the convex hull of $S$. This means that, with probability one, there is $n_0$ such that if $n\geq n_0$ we have $\sup_{r^{\prime}}\hat{c}_{n,\alpha_n,r^{\prime}}<c_{\rho}$.
Therefore, if $B_{\rho}(x)\subset C_{r}(\mathcal{X}_n)$, we get $\hat{r}_0\leq r$. This last statement follows from
$\hat{V}_{n,r^{\prime}}>\hat{c}_{n,\alpha_n,r^{\prime}}$ for all $r^{\prime}\geq r$ and the definition of $\hat{r}_0$, see (\ref{r0estimador}).
}\end{proof}

\normalsize
Theorem \ref{consistencia} is, then, a straightforward consequence of Propositions \ref{alpha} and \ref{prop_nosobreestimamos}.\hfill $\Box$\vspace{.15cm}\\
\normalsize
\emph{Proof of Theorem \ref{consistencia2}}\vspace{.15 cm}\\For the uniform distribution on $S$, Theorem 3 of Rodr\'{\i}guez-Casal (2007) ensures that, under ($R_{\widetilde{r}}^{\widetilde{r}}$), then $\mathbb{P}(\mathcal{E}_n)\to 1$, where
$$
\mathcal{E}_n=\left\{d_H(S,C_{\widetilde{r}}(\mathcal{X}_n))\leq A\left(\frac{\log n}{n}\right)^{2/(d+1)}\right\},
 $$
and $A$ is some constant. Under the hypothesis of Theorem \ref{consistencia2} this holds for any $\widetilde{r}\leq \min\{r,\lambda\}$. Fix one $\widetilde{r}\leq \min\{r,\lambda\}$ such
that $\widetilde{r}<\nu r_0$ and define
$\mathcal{R}_n=\{\widetilde{r}\leq r_n\leq r_0\}$. Since, by
Theorem \ref{consistencia}, $r_n=\nu \hat{r}_0$ converges in probability to $\nu r_0$ and $\widetilde{r}<\nu r_0<r_0$, we have that $\mathbb{P}(\mathcal{R}_n)\to 1$.  If the events
$\mathcal{E}_n$ and $\mathcal{R}_n$  hold (notice that $\mathbb{P}(\mathcal{E}_n\cap \mathcal{R}_n)\to 1$) we have $C_{\widetilde{r}}(\mathcal{X}_n)\subset C_{r_n}(\mathcal{X}_n)\subset S$ and, therefore,
$$
d_H(S,C_{r_n}(\mathcal{X}_n))\leq d_H(S,C_{\widetilde{r}}(\mathcal{X}_n))\leq A\left(\frac{\log n}{n}\right)^{2/(d+1)}.
$$
This completes the proof of the first statement of Theorem  \ref{consistencia2}. Similarly, it is possible to prove the result for the other error criteria considered in
Theorem \ref{consistencia2}. \hfill $\Box$

\normalsize

$ $\\
\emph{Acknowledgements.} This work has been supported by Project MTM2008-03010 of the Spanish Ministry of Science and Innovation and the IAP network StUDyS (Developing crucial Statistical methods for Understanding major complex Dynamic Systems in natural, biomedical and social sciences) of
Belgian Science Policy.

\end{document}